\numberwithin{equation}{section}
\numberwithin{figure}{section}
\newlength{\baseunit}
\newtheorem{thm}{Theorem}
\newtheorem{lem}{Lemma}
\newtheorem{remark}{Remark}
\newtheorem{problem}{Problem}
\newcommand{\comments}[1]{}
\numberwithin{lem}{section}
\begin{document}
\title{The Brenner-Hochster-Koll\'ar and Whitney Problems for Vector-Valued Functions
and Jets}
\author{Charles L. Fefferman}
\thanks{The first author is partially supported by NSF and ONR grants DMS
09-01040 and N00014-08-1-0678.}
\author{Garving K. Luli}
\maketitle


\section{Introduction}

In \cite{Feff-Kollar}, the first author and J. Koll\'ar studied the
following problem

\begin{problem}[The Brenner-Hochster-Koll\'ar Problem]
\label{problem3-1} Let $X$ be a space of continuous functions on a topological space $E$. Suppose we are given $%
\mathbb{R}^{s}$-valued functions $f_{1},\ldots ,f_{d}$, and $\phi $ on $E$. How can we decide whether there exist $\phi _{1},\ldots
,\phi _{d}\in X$ such that
\begin{equation}
\sum_{i=1}^{d}\phi _{i}f_{i}=\phi \text{ on } E?  \label{kollar}
\end{equation}
\end{problem}

For $X=C^{0}(E)$ with $E=\mathbb{R}^n$, \cite{Feff-Kollar} gives two effective
methods (one analytic and the other algebraic) for solving this problem when
the given functions $f_{1},\ldots ,f_{d},\phi $ are polynomials. (\cite{Feff-Kollar} also treats more general $E$.) Problem \ref%
{problem3-1} in that case arose from algebraic geometry; see Brenner \cite{Brenner}, Epstein-Hochster \cite{Hochster}, and Koll\'ar \cite{kollar}.
When $\phi _{1},\ldots ,\phi _{d}$ exist, the algebraic method in \cite%
{Feff-Kollar} produces semi-algebraic $\phi _{1},\ldots ,\phi _{d}$. On the
other hand, the analytic method solves Problem \ref{problem3-1} for $%
X=C^{0}\left( \mathbb{R}^{n}\right) $ without assuming that $f_{1},\ldots
,f_{d},\phi $ are polynomials. Here, we extend the analytic method in \cite%
{Feff-Kollar} to solve Problem \ref{problem3-1} for $X=C^{m}(\mathbb{R}^{n})$
(space of real-valued functions whose derivatives up to order $m$ are
continuous and bounded on $\mathbb{R}^{n}$) and $C^{m,\omega }(\mathbb{R}%
^{n})$ (space of real-valued functions whose m-th derivatives have modulus
of continuity $\omega $; see Section \ref{notation} for more details). These
cases were left open in \cite{Feff-Kollar}.

Our work on Problem \ref{problem3-1} relates to

\begin{problem}[Whitney's Extension Problem]
\label{whitneyproblem}Let $X$ denote a function space. Suppose we are given
a compact set $E\subset \mathbb{R}^{n}$ and a function $f:E\rightarrow \mathbb{%
R}$. How can we decide whether there exists $F\in X$ such that $F=f$ on $E$?
\end{problem}

For $X=C^{m,\omega}(\mathbb{R}^{n})$ and $X=C^{m}(\mathbb{R}^{n})$, Problem %
\ref{whitneyproblem} was solved in \cite{F5,F1}, building on previous work
of H. Whitney \cite{Whitney1,Whitney2}, Brudnyi-Shvartsman \cite%
{brudny1,brudnyi1,brudnyi4,brudnyi2,brudnyi3}, and Bierstone-Milman-Paw\l %
ucki \cite{Bierstone-Milman}.

For $X=C^{m}(\mathbb{R}^{n})$, we will solve a more general problem that
includes both the Brenner-Hochster-Koll\'ar and the Whitney problems as special
cases. We believe that this general problem is of independent interest.

To facilitate the statement of our generalization of Problems \ref%
{problem3-1} and \ref{whitneyproblem}, we introduce a few definitions and a
bit of notation.

Let $C^{m}(\mathbb{R}^{n},\mathbb{R}^{d})$ be the space of $\mathbb{R}^{d}$%
-valued functions whose derivatives up to order $m$ are continuous and
bounded on $\mathbb{R}^{n}$. We write $\mathcal{P}_{m,n}$ to denote the
vector space of all (real) polynomials of degree at most $m$ on $\mathbb{R}%
^{n}$. For real-valued functions $F$, $J_{x}^{m}F$ stands for the m-jet at $%
x $, which we identify with the Taylor polynomial
\begin{equation*}
\hat{x}\longmapsto \sum_{\left\vert \alpha \right\vert \leq m}\frac{1}{%
\alpha !}\left( \partial ^{\alpha }F\right) \left( x\right) \left( \hat{x}%
-x\right) ^{\alpha }.
\end{equation*}%
Thus, the ring of $\mathcal{R}_{m,n}^{x}$ of $m$-jets of functions at $x$ is
identified with $\mathcal{P}_{m,n}$, the space of real m-th degree
polynomials on $\mathbb{R}^n$; and the multiplication $\odot _{m,n}^{x}$ in $%
\mathcal{R}_{m,n}^{x}$ may be regarded as a multiplication on $\mathcal{P}%
_{m,n}$. Here, the multiplication $\odot _{m,n}^{x}$ is defined as $P\odot
_{m,n}^{x}Q\equiv J_{x}^{m}\left( PQ\right) $ for $P,Q\in \mathcal{R}%
_{m,n}^{x}$. For vector-valued functions $\vec{F}=\left( F_{1},\ldots
,F_{d}\right) $, we write $J_{x}^{m}\vec{F}$ to denote the vector $%
(J_{x}^{m}F_{1},\ldots ,J_{x}^{m}F_{d})\in $ $\underset{d}{\underbrace{%
\mathcal{P}_{m,n}\oplus \ldots \oplus \mathcal{P}_{m,n}}}=(\mathcal{P}%
_{m,n})^{d}$. We regard $(\mathcal{P}_{m,n})^{d}$ as an $\mathcal{R}%
_{m,n}^{x}$-module by the multiplication rule $Q\odot _{m,n}^{x}\left(
P_{1},\ldots ,P_{d}\right) =\left( Q\odot _{m,n}^{x}P_{1},\ldots ,Q\odot
_{m,n}^{x}P_{d}\right) $.

To motivate the next few definitions, we note that the $m$-jet $\left(
P_{1},\ldots ,P_{d}\right) $ at $x$ of any solution of (\ref{kollar})
belongs to
\begin{equation}
\vec{H}\left( x\right) =\left\{ \vec{P}=\left( P_{1},\ldots ,P_{d}\right)
:P_{1}\left( x\right) f_{1}\left( x\right) +\ldots +P_{d}\left( x\right)
f_{d}\left( x\right) =\phi \left( x\right) \right\} \subset (\mathcal{P}%
_{m,n})^{d}\text{.}  \label{H(x)}
\end{equation}%
Here, the affine subspace $\vec{H}\left( x\right) \subset (\mathcal{P}%
_{m,n})^{d}$ may be computed from (\ref{kollar}) by elementary linear
algebra. Perhaps $\vec{H}\left( x\right) $ is empty. (By convention, we
allow the empty set, single points, and all of $(\mathcal{P}_{m,n})^{d}$ as
affine subspaces of $(\mathcal{P}_{m,n})^{d}$.) If $\vec{H}\left( x\right) $
is non-empty and if $\vec{P}_{x}^{0}$ is any element of $\vec{H}\left(
x\right) $, then we may express
\begin{equation*}
\vec{H}\left( x\right) =\vec{P}_{x}^{0}+\vec{I}\left( x\right) ,
\end{equation*}%
where $\vec{I}\left( x\right) =\left\{ \vec{P}=\left( P_{1},\ldots
,P_{d}\right) :P_{1}\left( x\right) f_{1}\left( x\right) +\ldots
+P_{d}\left( x\right) f_{d}\left( x\right) =0\right\} $ is an $\mathcal{R}%
_{m,n}^{x}$-submodule of the $\mathcal{R}_{m,n}^{x}$-module $(\mathcal{P}%
_{m,n})^{d}$.

A vector of functions $\vec{F}\in C^{m}\left( \mathbb{R}^{n},\mathbb{R}%
^{d}\right) $ solves equation (\ref{kollar}) if and only if $J_{x}^{m}\vec{F}%
\in \vec{H}\left( x\right) $ for all $x\in \mathbb{R}^{n}$.

Similarly, let $f:E\rightarrow \mathbb{R}$ be as in Problem \ref%
{whitneyproblem}. For $x \in E$ define
\begin{equation}
H(x)=\{P \in \mathcal{P}_{m,n} : P(x)=f(x)\},  \label{real-value1}
\end{equation}
\begin{equation}
f^x = \text {the constant polynomial } \hat{x} \longmapsto f(x),
\end{equation}
and
\begin{equation*}
I(x)=\{P \in \mathcal{P}_{m,n} : P(x)=0\}.
\end{equation*}
Then $H(x)=f^x+I(x)$, $I(x)$ is an ideal in $\mathcal{R}^x_{m,n}$, and a
function $F \in C^m(\mathbb{R}^n, \mathbb{R})$ satisfies $F=f$ on $E$ if and
only if $J_x^m F \in H(x)$ for all $x \in E$.

The above remarks motivate the following definitions. Fix integers $m\geq 0$%
, $n\geq 1$, $d\geq 1$. Let $E\subset \mathbb{R}^{n}$ be compact. A
\underline{bundle} over $E$ is a family $\mathcal{\vec{H}}=\left( \vec{H}%
\left( x\right) \right) _{x\in E}$ of (possibly empty) affine subspaces $%
\vec{H}\left( x\right) \subset (\mathcal{P}_{m,n})^{d}$, parametrized by the
points $x\in E$, such that each non-empty $\vec{H}\left( x\right) $ has the
form
\begin{equation*}
\vec{H}\left( x\right) =\vec{P}^{x}+\vec{I}\left( x\right)
\end{equation*}%
for some $\vec{P}^{x}\in (\mathcal{P}_{m,n})^{d}$ and some $\mathcal{R}%
_{m,n}^{x}$-submodule $\vec{I}\left( x\right) $ of $(\mathcal{P}_{m,n})^{d}$.

We make no assumptions as to how $\vec{H}\left( x\right) $, $\vec{P}^{x}$, $%
\vec{I}\left( x\right) $ depend on $x$.

We call $\vec{H}\left( x\right) $ the \underline{fiber} of $\mathcal{\vec{H}}
$ at $x$. If $\mathcal{\vec{H}}^{\prime }=\left( \vec{H}^{\prime }\left(
x\right) \right) _{x\in E}$ is another bundle over $E$, then we call $%
\mathcal{\vec{H}}^{\prime }$ a \underline{subbundle} of $\mathcal{\vec{H}}$
provided $\vec{H}^{\prime }\left( x\right) \subseteq \vec{H}\left( x\right) $
for each $x\in E$. If $\mathcal{\vec{H}}^{\prime }$ is a subbundle of $%
\mathcal{\vec{H}}$, then we write $\mathcal{\vec{H}}\supseteq \mathcal{\vec{H%
}}^{\prime }$. Finally, a \underline{section} of a bundle $\mathcal{\vec{H}}%
=\left( \vec{H}\left( x\right) \right) _{x\in E}$ is an $\mathbb{R}^{d}$%
-valued function $\vec{F}\in C^{m}\left( \mathbb{R}^{n},\mathbb{R}%
^{d}\right) $ such that $J_{x}^{m}\vec{F}\in \vec{H}\left( x\right) $ for
each $x\in E$.

We can now state

\begin{problem}[Generalized Whitney's Extension Problem for $C^{m}$]
\label{problem1} Fix $m,n,d$. How can we decide whether a given bundle $%
\mathcal{\vec{H}}=\left( \vec{H}\left( x\right) \right) _{x\in E}$ has a
section?
\end{problem}

From our discussion of the bundles formed by (\ref{H(x)}) and by (\ref%
{real-value1}), we see that Problem \ref{problem3-1} for $X=C^{m}\left(
\mathbb{R}^{n},\mathbb{R}^{d}\right) $ and Problem \ref{whitneyproblem} for $%
X=C^{m}(\mathbb{R}^n,\mathbb{R})$ are special cases of Problem \ref{problem1}%
.

For the scalar case (i.e., $d=1$), Problem \ref{problem1} is well-understood
thanks to Bierstone-Milman-Paw\l ucki \cite{Bierstone-Milman} and the first
author \cite{F1} (see references therein). Problem \ref{problem1} for $%
X=C^{0}\left( \mathbb{R}^{n},\mathbb{R}^{d}\right) $ is solved in \cite%
{Feff-Kollar}. In this paper, we solve Problem \ref{problem1} for all $m,n,d$
by reducing it to the known scalar case $d=1$.

A variant of Problem \ref{problem1} with $C^{m}(\mathbb{R}^{n},\mathbb{R}%
^{d})$ replaced by $C^{m,\omega }(\mathbb{R}^{n},\mathbb{R}^{d})$ is also of
interest. Here, $C^{m,\omega }(\mathbb{R}^{n},\mathbb{R}^{d})$ is the space
of $C^{m}$ functions whose $m$-th derivatives have a given modulus of
continuity $\omega $ (see Section \ref{notation}). More precisely, we assume
that $\omega $ is a "regular modulus of continuity"\ (again, see Section \ref%
{notation}).

\begin{problem}[Generalized Whitney's Extension Problem for $C^{m,\protect\omega }$]
\label{problem2}Fix $m,n,d$. Let $E$ be an (arbitrary) given subset of $%
\mathbb{R}^{n}$. Suppose at each $x\in E$, we are given an m-jet $\vec{f}%
(x)\in (\mathcal{P}_{m,n})^{d}$ and a convex set $\sigma (x)\subseteq (%
\mathcal{P}_{m,n})^{d}$, with $\sigma \left( x\right) $ symmetric about the
origin. How can decide whether there exist $\vec{F}\in C^{m,\omega }(\mathbb{%
R}^{n},\mathbb{R}^{d})$ and $M<\infty $ such that $J_{x}^{m}\vec{F}-\vec{f}(x)\in
M\sigma (x)$ for all $x\in E$?
\end{problem}

For the case $d=1$, Problem \ref{problem2} has been extensively studied (see
\cite{brudnyi1}, \cite{brudnyi4}, \cite{brudnyi2}, \cite{brudnyi3}, \cite{F2}%
, \cite{F4}, \cite{F5}). More specifically, for $d=1$, if the convex sets $%
\sigma (x)$ are assumed to satisfy a condition called \textquotedblleft
Whitney $\omega $-convexity,\textquotedblright\ then a complete answer to
Problem \ref{problem2} is given in \cite{F2}.

In this paper, we formulate the notion of Whitney $\omega $-convexity for
the general case (all $m,n,d$), and solve Problem \ref{problem2} under the
assumption that the convex sets $\sigma \left( x\right) $ are Whitney $%
\omega $-convex.

We will see that every $\mathcal{R}_{m,n}^{x}$-submodule of $(\mathcal{P}%
_{m,n})^{d}$ is Whitney $\omega $-convex. Consequently, Problem \ref%
{problem2} includes the direct analogue of Problem \ref{problem1} with $%
C^{m}\left( \mathbb{R}^{n},\mathbb{R}^{d}\right) $ replaced by $C^{m,\omega
}\left( \mathbb{R}^{n},\mathbb{R}^{d}\right) $. Thus, by solving Problems %
\ref{problem1} and \ref{problem2} as promised above, we also solve Problem %
\ref{problem3-1} for $X=C^{m}\left( \mathbb{R}^{n},\mathbb{R}^{d}\right) $
and for $X=C^{m,\omega }\left( \mathbb{R}^{n},\mathbb{R}^{d}\right) $.

Our definition of Whitney $\omega $-convexity is given in Section \ref%
{notation}. As we explain there, each Whitney $\omega $-convex set has a
"Whitney constant" $A\geq 1$. In a spirit similar to that of \cite{F2}, we
solve Problem \ref{problem2} for Whitney $\omega $-convex $\sigma (x)$ by
means of the following:

\begin{thm}
\label{Thm1}Fix $m,n,d$. Then there exists an integer $k^{\#}$ (depending
only on integers $m,n,d$) such that the following holds: Let $E\subset
\mathbb{R}^{n}$ be an arbitrary subset and let $\omega $ be a regular
modulus of continuity. Suppose for each $x\in E$, we are given an $m$-jet $%
\vec{f}(x)\in (\mathcal{P}_{m,n})^{d}$ and a Whitney $\omega $-convex set $%
\sigma \left( x\right) \subseteq (\mathcal{P}_{m,n})^{d}$ (with Whitney
constant $A$). Let $M$ be a positive real number. \newline
Assume the following condition is satisfied: For each $S\subseteq E$ with $%
\#\left( S\right) \leq k^{\#}$, there exists $\vec{F}^{S}\in C^{m,\omega
}\left( \mathbb{R}^{n},\mathbb{R}^{d}\right) $ such that

\begin{enumerate}
\item[(i)] $J_{x}^{m}\vec{F}^{S}\in \vec{f}(x)+M\sigma \left( x\right) $ for
all $x\in S$.

\item[(ii)] $\left\Vert \vec{F}^{S}\right\Vert _{C^{m,\omega }\left( \mathbb{%
R}^{n},\mathbb{R}^{d}\right) }\leq M$.
\end{enumerate}

Then there exists $\vec{F}\in C^{m,\omega }\left( \mathbb{R}^{n},\mathbb{R}%
^{d}\right) $ such that $J_{x}^{m}\vec{F}\in \vec{f}(x)+C\left( A\right)
M\sigma \left( x\right) $ for all $x\in E,$ and $\left\Vert \vec{F}%
\right\Vert _{C^{m,\omega }\left( \mathbb{R}^{n},\mathbb{R}^{d}\right) }\leq
C\left( A\right) M$. Here, $C\left( A\right) $ depends only on $A,m,n,d$.
\end{thm}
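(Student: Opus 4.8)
The plan is to reduce Theorem \ref{Thm1}, which concerns vector-valued functions and general Whitney $\omega$-convex sets, to the already-known scalar case ($d=1$) of the finiteness principle for $C^{m,\omega}$. The central device is a ``Glaeser-type'' refinement of the bundle of convex sets: starting from $(\vec f(x),\sigma(x))_{x\in E}$, I would iteratively pass to smaller convex sets $\sigma^{(\ell)}(x)$ (and adjusted jets) by stipulating that $\vec f(x)+M\sigma^{(\ell+1)}(x)$ consist only of those jets at $x$ that extend, up to a controlled loss in the constant, to jets lying in $\vec f(y)+CM\sigma^{(\ell)}(y)$ at all nearby $y$, with the bound dictated by $\omega$ and the distance $|x-y|$. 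One must check that Whitney $\omega$-convexity (with a controlled Whitney constant) is preserved under this operation — this is where the specific definition of Whitney $\omega$-convexity in Section \ref{notation} does its work — and that the process stabilizes after a number of steps bounded in terms of $m,n,d$ only, because the relevant data (dimensions of the linear spans, etc.) can only decrease finitely often. The hypothesis (for all $S$ with $\#S\le k^{\#}$) guarantees, by a Helly/van-der-Corput-style counting argument, that the stabilized convex sets are all nonempty, provided $k^{\#}$ is chosen large enough relative to the number of refinement steps and the dimension of $(\mathcal{P}_{m,n})^d$.

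**Next**, once the bundle is ``Glaeser stable,'' I would build the section by a Whitney-type patching argument adapted to the $C^{m,\omega}$ setting: take a Calder\'on–Zygmund decomposition of $\mathbb{R}^n\setminus E$ (or of a neighborhood of $E$) into dyadic cubes $Q_\nu$ with a partition of unity $\{\theta_\nu\}$ satisfying the usual derivative bounds $|\partial^\beta\theta_\nu|\lesssim (\mathrm{diam}\,Q_\nu)^{-|\beta|}$, choose for each $Q_\nu$ a ``representative'' jet $\vec P_\nu$ lying in the stabilized fiber over a nearby point of $E$, and set $\vec F=\sum_\nu\theta_\nu\,\vec P_\nu$ near $E$, with $\vec F$ given directly by a fixed local extension away from $E$. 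The Glaeser stability is exactly what is needed to estimate the differences $\vec P_\nu-\vec P_{\nu'}$ on overlapping cubes and $\vec P_\nu-\vec f(x)$ for $x\in E$ near $Q_\nu$, yielding $\|\vec F\|_{C^{m,\omega}}\le C(A)M$ and $J_x^m\vec F\in\vec f(x)+C(A)M\sigma(x)$ for $x\in E$. Since the construction is linear in the local data, the vector-valued case here is essentially $d$ parallel copies of the scalar patching, with the coupling between components entirely encoded in the convex sets $\sigma(x)$; hence the scalar $C^{m,\omega}$ finiteness theorem, applied coordinate-by-coordinate or invoked as a black box for the patching estimates, suffices.

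**The main obstacle** I anticipate is the interplay between the geometry of the convex sets and the iteration: one must formulate the refinement so that (a) a single ``bad'' convex set is detected by testing against finitely many nearby points with a cardinality bound independent of $\omega$ and the $\sigma(x)$, and (b) the Whitney constant does not blow up across the bounded number of iterations — it is allowed to grow, but only by a factor $C(m,n,d)$ per step. Controlling this requires a careful quantitative version of ``Whitney $\omega$-convexity is stable under intersection with translates of nearby fibers,'' which is the technical heart of the argument; everything else (Helly-type nonemptiness, CZ patching, the final norm estimate) is a fairly standard adaptation of the machinery from \cite{F2,F5} to the vector-valued, convex-set-valued framework. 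A secondary point to watch is the treatment of arbitrary (non-compact, non-closed) $E$ in Problem \ref{problem2}: one reduces to the closure of $E$ by a limiting argument, using that the hypothesis is stable under taking closures because it only involves finite subsets.
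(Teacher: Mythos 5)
Your proposal takes a fundamentally different route from the paper, and it contains a genuine gap precisely at the step you label as the reduction to the scalar case. You propose to re-run the entire machinery of \cite{F2} in the vector-valued setting: a Glaeser-type iterated refinement of the convex sets, a Helly-type nonemptiness argument, a Calder\'on--Zygmund decomposition, and a Whitney-style patching. That is not a reduction to the known scalar theorem; it is a re-proof of it. Worse, the actual invocation of the scalar theorem you offer --- ``applied coordinate-by-coordinate or invoked as a black box for the patching estimates'' --- does not work. The Whitney $\omega$-convex sets $\sigma(x)\subseteq(\mathcal{P}_{m,n})^d$ are genuinely coupled across components; they are not products of convex subsets of $\mathcal{P}_{m,n}$, so there is no coordinate-by-coordinate decoupling, and ``black box for the patching estimates'' is a placeholder rather than an argument. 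In short, you acknowledge the coupling as the main obstacle and then assert, without justification, that it disappears. Additionally, Glaeser refinement and the stabilization lemma belong to the $C^{m}$ theory (Theorem \ref{Thm3} and Lemma \ref{lem1} in the paper), not to the $C^{m,\omega}$ finiteness principle; importing them here conflates two distinct mechanisms, and the CZ/patching layer you propose to rebuild is precisely what you would be entitled to treat as a black box if you had found an honest reduction.

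The paper's actual proof is a short dimension-and-order lift, and it achieves a true reduction to the scalar result. One introduces $d$ dummy variables $\hat v=(\hat v_1,\dots,\hat v_d)$, so that a vector $m$-jet $(P_1,\dots,P_d)$ at $x\in\mathbb{R}^n$ is encoded as the single real-valued $(m+1)$-jet $\sum_{j=1}^d \hat v_j P_j(\hat x)$ at $(x,0)\in\mathbb{R}^{n+d}$. The data $(\vec f(x),\sigma(x))$ become a real-valued $(m+1)$-jet $\hat f(x,0)$ and a set
\begin{equation*}
\hat\sigma((x,0))=\left\{\hat P\in\mathcal{R}^{(x,0)}_{m+1,n+d}:\ \hat P|_{\hat v=0}\equiv 0,\ \left([\partial_{\hat v_1}\hat P]|_{\hat v=0},\dots,[\partial_{\hat v_d}\hat P]|_{\hat v=0}\right)\in\sigma(x)\right\},
\end{equation*}
and one checks that $\hat\sigma((x,0))$ is Whitney $\omega$-convex in $\mathcal{R}^{(x,0)}_{m+1,n+d}$ with the same Whitney constant. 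The hypothesis of Theorem \ref{Thm1} for finite $S$ directly yields the hypothesis of the scalar finiteness principle (Theorem \ref{Thm2}) on $E\times\{0\}\subset\mathbb{R}^{n+d}$; applying that black box gives $G\in C^{m+1,\omega}(\mathbb{R}^{n+d})$, and setting $\vec F=(\partial_{\hat v_1}G|_{\hat v=0},\dots,\partial_{\hat v_d}G|_{\hat v=0})$ finishes the proof. After the lift the problem is literally scalar --- there is no residual coupling to decouple --- which is exactly what your proposal lacks. Your closing remark about arbitrary (non-compact, non-closed) $E$ is also a non-issue once one routes through Theorem \ref{Thm2}, which already handles arbitrary $\bar E\subset\mathbb{R}^n$.
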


Theorem \ref{Thm1} is a type of "Finiteness Principle"; the constant $k^{\#}$
is often referred to as a "finiteness constant." The idea of the Finiteness
Principle originated in the work of Brudnyi-Shvartsman (see \cite%
{brudnyi4,pavel0}). In essence, Theorem \ref{Thm1} reduces Problem \ref%
{problem2} for a general set $E$ to the special case of finite $E$ with
bounded cardinality. This special case is readily solvable, as we explain in
Section \ref{solution-to-kollar}.

We are pleasantly surprised to learn that the proof of Theorem \ref{Thm1}
follows from the scalar case ($d=1)$, which has been proven in \cite{F2}. We
should also remark that P. Shvartsman has communicated to us his unpublished
proof of Theorem \ref{Thm1} for the case $m=0$, as a consequence of his
results \cite{pavel1,pavel2} on "Lipschitz selection."

To explain our solution to Problem \ref{problem1}, we need to introduce some
more terminology.

Fix $m,n,d$, and let $k^{\#}$ be a large enough constant depending only on $%
m,n,d$ (see Section \ref{finitenessforcm} for a discussion on the size of $k^{\#}$).

Let $\vec{\mathcal{H}}=\left( \vec{H}\left( x\right) \right) _{x\in E}$ be a
bundle.

Then the "Glaeser refinement" $\vec{\mathcal{H}}^{\prime }=\left( \vec{H}%
^{\prime }\left( x\right) \right) _{x\in E}$ is a subbundle of $\vec{%
\mathcal{H}}$, defined as follows:

Given $x_{0}\in E$ and $\vec{P}_{0}\in \vec{H}\left( x_{0}\right) $ , we say
that $\vec{P}_{0}\in \vec{H}^{\prime }\left( x_{0}\right) $ if and only if
the following condition holds

Given $\varepsilon >0$, there exists $\delta >0$ such that for all $%
x_{1},\ldots ,x_{k^{\#}}\in E\cap B\left( x_{0},\delta \right) $, there
exist $\vec{P}_{1}\in \vec{H}\left( x_{1}\right) ,\ldots ,\vec{P}%
_{k^{\#}}\in \vec{H}\left( x_{k^{\#}}\right) $ with
\begin{equation*}
\left\vert \partial ^{\alpha }\left( \vec{P}_{i}-\vec{P}_{j}\right) \left(
x_{i}\right) \right\vert \leq \varepsilon \left\vert x_{i}-x_{j}\right\vert
^{m-\left\vert \alpha \right\vert },
\end{equation*}%
for $\left\vert \alpha \right\vert \leq m,$ $0\leq i,j\leq k^{\#}$. (Compare
with Glaeser \cite{Glaeser}, Bierstone-Milman-Paw\l ucki \cite%
{Bierstone-Milman}, C. Fefferman \cite{F1}, and C. Fefferman-Koll\'ar \cite%
{Feff-Kollar}.)

The Glaeser refinement has three crucial properties:

\begin{itemize}
\item[(P1)] $\vec{\mathcal{H}}^{\prime }$ is a subbundle of $\vec{\mathcal{H}%
}$.

\item[(P2)] Any section of $\vec{\mathcal{H}}$ is also a section of $\vec{
\mathcal{H}}^{\prime }$. (This follows easily from Taylor's theorem.)

\item[(P3)] In principle, $\vec{\mathcal{H}}^{\prime }$ may be computed from
$\vec{\mathcal{H}}$ by doing elementary linear algebra and computing a $\lim
\sup $. We explain this in Section \ref{solution-to-kollar} below.
\end{itemize}

Let us consider the implication of the Glaeser refinement for Problem \ref%
{problem1}. Starting with a bundle $\vec{\mathcal{H}}_{0}$ over $E$ and
repeatedly taking the Glaeser refinement, we obtain a sequence of bundles
\begin{equation*}
\vec{\mathcal{H}}_{0}\supseteq \vec{\mathcal{H}}_{1}\supseteq \vec{\mathcal{H%
}}_{2}\supseteq \ldots \text{ over }E\text{.}
\end{equation*}%
For each $l$, $\vec{\mathcal{H}}_{l+1}$ is the Glaeser refinement of $\vec{%
\mathcal{H}}_{l}$. By (P1) and (P2), the bundles $\vec{\mathcal{H}}_{l}$
have the same sections. Therefore, Problem \ref{problem1} for a given bundle
$\vec{\mathcal{H}}_{0}$ is the same as Problem \ref{problem1} for any of the
iterated Glaeser refinements $\vec{\mathcal{H}}_{l}$.

A lemma adapted from \cite{F1} (which in turn is adapted from \cite{Glaeser},%
\cite{Bierstone-Milman}) shows that $\vec{\mathcal{H}}_{l}=\left(\vec{H}%
_{l}(x)\right)_{x\in E}$ stablizes after a finite number of iterations. More
precisely, for $L=2\dim [(\mathcal{P}_{m,n})^{d}] +1$, we have $\vec{H}%
_{l}(x)=\vec{H}_{L}(x)$ for all $l\geq L$ and $x \in E$. For the sake of
completeness, we reproduce a proof of this in Section \ref%
{solution-to-kollar} below.

Passing from $\vec{\mathcal{H}}_{0}$ to $\vec{\mathcal{H}}_{L}$, we
therefore reduce Problem \ref{problem1} to the special case in which $\vec{%
\mathcal{H}}$ is its own Glaeser refinement.

This special case of Problem \ref{problem1} is solved by means of the
following:

\begin{thm}
\label{Thm3} Let $\vec{\mathcal{H}}=(\vec{H}(x))_{x\in E}$ be a bundle.
Suppose $\vec{\mathcal{H}}$ is its own Glaeser refinement and each fiber of $%
\vec{\mathcal{H}}$ is non-empty. Then $\vec{\mathcal{H}}$ admits a section.
\end{thm}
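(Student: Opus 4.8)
The plan is to reduce Theorem~\ref{Thm3} to Theorem~\ref{Thm1} by a standard ``convexity plus finiteness'' argument, combined with the stabilization of Glaeser refinements. First I would reformulate the problem in $C^{m,\omega}$ terms: fix any regular modulus of continuity $\omega$ (say $\omega(t)=t$), and for each $x\in E$ pick $\vec f(x)\in\vec H(x)$ (possible since the fibers are non-empty) and set $\sigma(x)=\{\vec P:\vec P+\vec f(x)\in\vec H(x)\}$, which is precisely the $\mathcal R_{m,n}^x$-submodule $\vec I(x)$ underlying the affine fiber $\vec H(x)$. As promised in the excerpt, every $\mathcal R_{m,n}^x$-submodule of $(\mathcal P_{m,n})^d$ is Whitney $\omega$-convex with a Whitney constant $A$ depending only on $m,n,d$; so $(\vec f(x),\sigma(x))_{x\in E}$ is exactly the kind of data to which Theorem~\ref{Thm1} applies. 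A section $\vec F$ of $\vec{\mathcal H}$ is then the same thing as an $\vec F\in C^{m,\omega}(\mathbb R^n,\mathbb R^d)$ with $J_x^m\vec F\in\vec f(x)+M\sigma(x)$ for all $x\in E$ with $M=0$; more practically, any $\vec F$ with $J_x^m\vec F-\vec f(x)\in C(A)M\sigma(x)$ still has $J_x^m\vec F\in\vec H(x)$ because $\sigma(x)$ is a linear subspace, so $C(A)M\sigma(x)=\sigma(x)$. Hence producing a section amounts to verifying the finiteness hypothesis (i)--(ii) of Theorem~\ref{Thm1} for \emph{some} finite $M$.

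The heart of the matter is therefore to show: for every $S\subseteq E$ with $\#(S)\le k^{\#}$, there is $\vec F^S\in C^{m,\omega}(\mathbb R^n,\mathbb R^d)$ with $J_x^m\vec F^S\in\vec H(x)$ for all $x\in S$ and $\|\vec F^S\|_{C^{m,\omega}}\le M$ for a uniform $M$. Here is where I would use the hypothesis that $\vec{\mathcal H}$ equals its own Glaeser refinement. The Glaeser stability says: given $x_0\in E$ and the chosen jet $\vec P_0=\vec f(x_0)\in\vec H(x_0)$, for every $\varepsilon>0$ there is $\delta>0$ so that any $k^{\#}$ points $x_1,\dots,x_{k^{\#}}\in E\cap B(x_0,\delta)$ carry jets $\vec P_i\in\vec H(x_i)$ that are $\varepsilon$-Whitney-compatible with $\vec P_0$ to order $m$. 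I would use this, together with a compactness/covering argument on $E$, to produce for each small finite $S$ a single polynomial-level Whitney field on $S$ that is genuinely $C^{m,\omega}$-coherent, then invoke the classical Whitney extension theorem for finite sets (or simply interpolate by a single polynomial when $S$ is close to a point, and patch with a partition of unity when $S$ splits into well-separated clusters) to build $\vec F^S$ with a bound $M$ that does not depend on $S$. The only delicate point is getting the bound $M$ uniform over all $S$ of size $\le k^{\#}$: since $E$ is compact one covers $E$ by finitely many balls on which the Glaeser condition gives a usable $\delta$, and on each such ball the number of geometrically distinct ``scales'' a set of $\le k^{\#}$ points can occupy is bounded, so only finitely many local extension problems arise, each solvable; a partition of unity subordinate to the cover assembles them with controlled norm.

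With the finiteness hypothesis of Theorem~\ref{Thm1} verified for this uniform $M$ and for the Whitney $\omega$-convex data $(\vec f(x),\sigma(x))$, Theorem~\ref{Thm1} yields $\vec F\in C^{m,\omega}(\mathbb R^n,\mathbb R^d)$ with $J_x^m\vec F\in\vec f(x)+C(A)M\,\sigma(x)=\vec f(x)+\sigma(x)=\vec H(x)$ for all $x\in E$ (the middle equality because $\sigma(x)$ is a linear subspace, hence invariant under scaling). Thus $\vec F$ is a section of $\vec{\mathcal H}$, completing the proof. The main obstacle I anticipate is the bookkeeping in the previous paragraph: extracting from the pointwise Glaeser condition a genuinely uniform modulus bound over all $k^{\#}$-point subsets, and correctly handling subsets whose points live at several widely separated scales, where one must cluster the points, solve the problem locally on each cluster using the Glaeser compatibility, and glue without blowing up the $C^{m,\omega}$ norm. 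Everything else—the translation to the $C^{m,\omega}$ framework, the Whitney $\omega$-convexity of submodules, and the final application of Theorem~\ref{Thm1}—is comparatively formal.
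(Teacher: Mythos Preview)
Your reduction to Theorem~\ref{Thm1} has a genuine gap, and it is not a matter of bookkeeping. The Glaeser condition is a $C^m$-type compatibility condition, not a $C^{m,\omega}$-type one: it asserts that for every $\varepsilon>0$ there is $\delta>0$ so that nearby points carry jets with
\[
|\partial^\alpha(\vec P_i-\vec P_j)(x_i)|\le \varepsilon\,|x_i-x_j|^{m-|\alpha|},
\]
whereas hypothesis~(iii) of Theorem~\ref{Thm2} (equivalently, what you need to build $\vec F^S$ with $\|\vec F^S\|_{C^{m,\omega}}\le M$) requires
\[
|\partial^\alpha(\vec P_i-\vec P_j)(x_i)|\le M\,\omega(|x_i-x_j|)\,|x_i-x_j|^{m-|\alpha|}.
\]
Since $\omega(|x_i-x_j|)\to 0$ as the points coalesce, no fixed $\varepsilon$ (hence no finite cover of $E$) can supply this stronger estimate with a uniform $M$. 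Concretely, take $m=0$, $d=1$, $E=[0,1]$, and $\vec H(x)=\{f(x)\}$ for a continuous $f$ that is not H\"older of any positive exponent. Then $\vec I(x)=\{0\}$, the bundle is its own Glaeser refinement (this is exactly uniform continuity of $f$), yet for the two-point set $S=\{x,y\}$ the only admissible $\vec F^S$ must satisfy $\vec F^S(x)=f(x)$, $\vec F^S(y)=f(y)$, forcing $\|\vec F^S\|_{C^{0,1}}\ge |f(x)-f(y)|/|x-y|$, which is unbounded. So the finiteness hypothesis of Theorem~\ref{Thm1} fails for every fixed regular $\omega$, and your program cannot be completed as stated. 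The section exists only in $C^m$, and there is no a~priori modulus $\omega$ one can choose.

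The paper proceeds quite differently: it bypasses Theorem~\ref{Thm1} entirely and instead reduces directly to the \emph{scalar} $C^m$ extension theorem (Theorem~\ref{feff-thm1} from~\cite{F1}), which is precisely the deep result that handles qualitative $C^m$ data without a quantitative modulus. The reduction is the same ``add $d$ dummy variables'' trick used for Theorem~\ref{Thm1}: one passes from the vector bundle $(\vec H(x))_{x\in E}$ over $E\subset\mathbb R^n$ to a scalar bundle $(\widehat H((x,0)))_{(x,0)\in E\times\{0\}}$ of $(m{+}1)$-jets on $\mathbb R^{n+d}$, defined by $\widehat H((x,0))=\hat f((x,0))+\hat I((x,0))$ where $\hat f((x,0))=\sum_j \hat v_j P_j^x(\hat x)$ and $\hat I((x,0))$ consists of those $(m{+}1)$-jets $P$ with $P|_{\hat v=0}\equiv 0$ and $(\partial_{\hat v_1}P|_{\hat v=0},\ldots,\partial_{\hat v_d}P|_{\hat v=0})\in\vec I(x)$. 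One checks that $\hat I((x,0))$ is an ideal in $\mathcal R_{m+1,n+d}^{(x,0)}$ and, crucially, that the new scalar bundle is its own Glaeser refinement whenever the original one is. Theorem~\ref{feff-thm1} then produces $G\in C^{m+1}(\mathbb R^{n+d})$ with $J^{m+1}_{(x,0)}G\in\widehat H((x,0))$, and $\vec F(\hat x)=(\partial_{\hat v_1}G|_{\hat v=0},\ldots,\partial_{\hat v_d}G|_{\hat v=0})$ is the desired section.
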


In the scalar case $d=1$, Theorem \ref{Thm3} is proven in \cite{F1}. We will
prove Theorem \ref{Thm3} in general by reducing it to the known scalar case,
just as for Theorem \ref{Thm1}.

We shall remark that our methods for solving Problem \ref{problem3-1} for $X=C^{m,\omega}(E)$ and $X=C^{m}(E)$ with $E=\mathbb{R}^n$ apply equally well to the solution of Problem \ref{problem3-1} for $X=C^{m,\omega}(E)$ and $X=C^{m}(E)$ with $E$ being a manifold.

\section*{Acknowledgements}

We are grateful to Arie Israel, Bo'az Klartag, J\'anos Koll\'ar, Assaf Naor,
and Pavel Shvartsman for valuable discussions. We are grateful also to the
American Institute of Mathematics (AIM) and the Office of Naval Research
(ONR) for supporting workshops at which some of those conversations
occurred. We thank J\'anos Koll\'ar for helpful suggestions that led to an improvement of the manuscript. Part of the research was conducted while the second author was
visiting the Mathematical Sciences Center at Tsinghua University, China; the
center's hospitality is gratefully acknowledged.

\section{Notation and Definitions}

\label{notation} We fix integers $d\geq1,n\geq 1$ and $m\geq 0$. $%
C^{m}\left( \mathbb{R}^{n},\mathbb{R}^{d}\right) $ denotes the space of
functions $F:\mathbb{R}^{n}\rightarrow \mathbb{R}^{d}$ whose derivatives up
to order $m$ are continuous and bounded on $\mathbb{R}^{n}$. For $\vec{F}%
=\left( F_{1},\ldots ,F_{d}\right) \in C^{m}\left( \mathbb{R}^{n},\mathbb{R}%
^{d}\right) $, we define the norm
\begin{equation*}
\left\Vert \vec{F}\right\Vert _{C^{m}\left( \mathbb{R}^{n},\mathbb{R}%
^{d}\right) }=\sup_{x\in \mathbb{R}^{n}}\max_{1\leq j\leq d,\left\vert
\alpha \right\vert \leq m}\left\vert \partial ^{\alpha }F_{j}\left( x\right)
\right\vert .
\end{equation*}

$C^{m,\omega }\left( \mathbb{R}^{n},\mathbb{R}^{d}\right) $ denotes the
space of all $C^{m}(\mathbb{R}^{n},\mathbb{R}^{d})$ functions $\vec{F}:%
\mathbb{R}^{n}\rightarrow \mathbb{R}^{d}$ for which the norm
\begin{equation*}
\left\Vert \vec{F}\right\Vert _{C^{m,\omega }\left( \mathbb{R}^{n},\mathbb{R}%
^{d}\right) }=\max \left\{ \left\Vert \vec{F}\right\Vert _{C^{m}\left(
\mathbb{R}^{n},\mathbb{R}^{d}\right) },\sup_{x,y\in \mathbb{R}%
^{n},0<\left\vert x-y\right\vert \leq 1}\max_{1\leq j\leq d,\left\vert
\alpha \right\vert =m}\left\vert \frac{\partial ^{\alpha }F_{j}\left(
x\right) -\partial ^{\alpha }F_{j}\left( y\right) }{\omega \left( \left\vert
x-y\right\vert \right) }\right\vert \right\} 
\end{equation*}
is finite. 

A function $\omega :[0,1]\rightarrow \lbrack 0,\infty )$ is called a
\textquotedblleft regular modulus of continuity\textquotedblright\ if it
satisfies the following conditions:

\begin{itemize}
\item $\omega(0)=\lim_{t\rightarrow0^{+}}\omega(t)=0$ and $\omega(1)=1$.

\item $\omega(t)$ is increasing on $[0,1]$.

\item $\omega(t)/t$ is decreasing on $(0,1]$.
\end{itemize}

Fix $x\in \mathbb{R}^n$. We say that $\sigma(x)\subseteq\left(\mathcal{P}%
_{m,n}\right)^{d}$ is ``Whitney $\omega$-convex (in $\left(\mathcal{P}%
_{m,n}\right)^{d}$) at $x$ with Whitney constant $A$'' if the following
conditions are satisfied:

\begin{itemize}
\item $\sigma(x)$ is closed, convex, symmetric (that is, $\vec{P} \in
\sigma(x)$ if and only if $-\vec{P} \in \sigma(x)$).

\item Let $\left( P_{1},\ldots ,P_{d}\right) \in \sigma(x)$, $Q\in \mathcal{R%
}_{m,n}^{x}$, and $\delta \in (0,1]$. Assume $\left( P_{1},\ldots
,P_{d}\right) $ and $Q$ satisfy the following estimates
\begin{eqnarray*}
\left\vert \partial ^{\alpha }P_{j}\left( x\right) \right\vert &\leq &\omega
\left( \delta \right) \delta ^{m-\left\vert \alpha \right\vert }\text{ for }%
1\leq j\leq d\text{ and }\left\vert \alpha \right\vert \leq m\text{;} \\
\left\vert \partial ^{\alpha }Q\left( x\right) \right\vert &\leq &\delta
^{-\left\vert \alpha \right\vert }\text{ for }\left\vert \alpha \right\vert
\leq m\text{.}
\end{eqnarray*}%
Then
\begin{equation*}
\left( P_{1}\odot _{m,n}^{x}Q,\ldots ,P_{d}\odot _{m,n}^{x}Q\right) \in
A\sigma(x) \text{,}
\end{equation*}%
where $\odot _{m,n}^{x}$ denotes the multiplication in $\mathcal{R}%
_{m,n}^{x} $.
\end{itemize}

From the definition of Whitney $\omega$-convexity, it immediately follows
that every $\mathcal{R}^x_{m,n}$-submodule of $(\mathcal{P}_{m,n})^d$ is
Whitney $\omega$-convex with Whitney constant $1$.

\section{Finiteness Principle for $C^{m,\protect\omega}$}

\label{section3}

In this section, we prove Theorem \ref{Thm1}.

We suppose we are given an arbitrary subset $E\subset \mathbb{R}^{n}$, a
regular modulus of continuity $\omega $, a vector-valued m-jet $\vec{f}%
(x)\in (\mathcal{P}_{m,n}^{x})^{d}$ and a Whitney $\omega $-convex set $%
\sigma \left( x\right) \subseteq (\mathcal{P}_{m,n})^{d}$ (with Whitney
constant $A$) at each point $x\in E$.

We denote by $\hat{x}=\left( \hat{x}_{1},\ldots ,\hat{x}_{n}\right) $ a
dummy variable in $\mathbb{R}^{n}$ and $\hat{v}=\left( \hat{v}_{1},\ldots ,%
\hat{v}_{d}\right) $ a dummy variable in $\mathbb{R}^{d}$.

We note that if $P(\hat{x},\hat{v})$ is an $(m+1)$-jet on $\mathbb{R}^{n+d}$%
, then $[P(\hat{x},\hat{v})]|_{\hat{v}=0}$ is an $(m+1)$-jet on $\mathbb{R}^n$,
and $[\partial_{\hat{v}_j}P]|_{\hat{v}=0}$ is an $m$-jet on $\mathbb{R}^n$.

To prove the theorem, we will show that there exists $G\left(\hat{x},\hat{v}%
\right):\mathbb{R}^{n+d}\rightarrow\mathbb{R}$ such that

\begin{enumerate}
\item $G\left(\hat{x},0\right)\equiv0$.

\item $\left( J_{x}^{m}\left[ \left. \partial _{\hat{v}_{1}}G\left( \hat{x},%
\hat{v}\right) \right\vert _{\hat{v}=0}\right] ,\ldots ,J_{x}^{m}\left[
\left. \partial _{\hat{v}_{d}}G\left( \hat{x},\hat{v}\right) \right\vert _{%
\hat{v}=0}\right] \right) \in \vec{f}(x)+MC\sigma \left( x\right) $ for all $%
x\in E$.

\item $\left\Vert G\right\Vert _{C^{m+1,\omega }\left( \mathbb{R}%
^{n+d}\right) }\leq CM$, for some constant $C$ depending only on $m,n,$ and $%
d$.
\end{enumerate}

Once this is proven, the theorem follows at once by taking
\begin{equation*}
\vec{F}\left( \hat{x}\right) =\left( \left. [\partial _{\hat{v}_{1}}G\left(
\hat{x},\hat{v}\right)] \right\vert _{\hat{v}=0},\ldots ,\left. [\partial _{%
\hat{v}_{d}}G\left( \hat{x},\hat{v}\right)] \right\vert _{\hat{v}=0}\right) .
\end{equation*}

First, we recall the following result (Theorem 3 in \cite{F2}); our proof of
Theorem \ref{Thm1} will be based on it.

\begin{thm}[Finiteness Principle for Real-valued Jets, C. Fefferman \protect\cite{F2}]
\label{Thm2}Given integers $m\geq 0,n\geq 1$, there exists $k^{\#}$,
depending only on $m$ and $n$, such that the following holds:

Let $\omega $ be a regular modulus of continuity, $\bar{E}\subset \mathbb{R}%
^{n}$ an arbitrary subset, and $\bar{A}>0.$ Suppose for each $\bar{x}\in
\bar{E}$, we are given a (real-valued) $m$-jet $\bar{f}(x)\in \mathcal{R}%
_{m,n}^{x}$, and a Whitney $\omega $-convex set $\bar{\sigma}\left( x\right)
\subseteq \mathcal{R}_{m,n}^{x}$ with Whitney constant $\bar{A}$.

Assume the following condition is satisfied: For each $S\subseteq\bar{E}$
with $\#\left(S\right)\leq k^{\#}$, there exists a map $x\longmapsto P^{x}$
from $S\rightarrow\mathcal{P}_{m,n}$ such that (i) $P^{x}\in\bar{f}(x)+M%
\bar{\sigma}\left(x\right)$ for all $x\in S$; (ii) $\left\vert
\partial^{\beta}P^{x}\left(x\right)\right\vert \leq M$ for all $x\in S$, $%
\left\vert \beta\right\vert \leq m$; and (iii) $\left\vert
\partial^{\beta}\left(P^{x}-P^{y}\right)\left(x\right)\right\vert
\leq\omega\left(\left\vert x-y\right\vert \right)\left\vert x-y\right\vert
^{m-\left\vert \beta\right\vert }$ for $\left\vert \beta\right\vert \leq m$
and $\left\vert x-y\right\vert \leq1$, $x,y\in S$.

Then there exists $G\in C^{m,\omega}\left(\mathbb{R}^{n}\right)$ such that
(i) $\left\Vert G\right\Vert _{C^{m,\omega}\left(\mathbb{R}^{n}\right)}\leq
CM$ and (ii) $J_{x}^{m}G\in\bar{f}(x)+CM\bar{\sigma}\left(x\right)$ for all $%
x\in E$. Here, $C$ depends only on $\bar{A}$, $m$, and $n$.
\end{thm}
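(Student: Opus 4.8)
The plan is to prove the scalar Finiteness Principle directly, by a Calder\'on--Zygmund decomposition combined with Whitney patching, organized around an induction on an auxiliary finite ``list'' attached to the convex sets. After dividing all data by $M$, I may assume $M=1$; the goal is then to produce $G\in C^{m,\omega}(\mathbb R^n)$ with $\|G\|_{C^{m,\omega}}\le C$ and $J_x^m G\in\bar f(x)+C\bar\sigma(x)$ for all $x\in\bar E$, where $C$ depends only on $\bar A,m,n$. I fix a cube $Q^\circ\supseteq\bar E$ and work inside it; the construction handles $\bar E$ of arbitrary cardinality, so no reduction to finite $\bar E$ is needed.

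The heart of the argument is a strengthening of the theorem suited to induction. Rather than prove the statement directly, I would prove a family of statements indexed by a finite ``list'' $\mathcal A$ of principal-ideal-type data associated to the sets $\bar\sigma(x)$ --- roughly, $\mathcal A$ records which low-order Taylor coefficients of members of $\bar\sigma(x)$ are unconstrained. The statement attached to $\mathcal A$ asserts the conclusion of the theorem under the extra hypothesis that each $\bar\sigma(x)$ is compatible with $\mathcal A$, with its own finiteness constant $k^\#(\mathcal A)$. The set of possible lists has cardinality bounded in terms of $m$ and $n$ only, and the lists are partially ordered; the base case is the maximal list, for which the constraint essentially pins down the jet and the statement is immediate. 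The induction proceeds by showing that for a non-maximal $\mathcal A$ the problem, when localized to a cube and rescaled to the unit cube, drops to a strictly shorter list $\mathcal A'$; the final $k^\#$ is the maximum over all $\mathcal A$, hence depends only on $m,n$.

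For the inductive step I would run a Calder\'on--Zygmund decomposition of $Q^\circ$: a dyadic subcube $Q$ is declared \emph{OK} if the data restricted to $(C_0 Q)\cap\bar E$ (a fixed dilate) can be solved, with norm $\le C$ at scale $\mathrm{diam}(Q)$, by invoking the inductive hypothesis for the strictly shorter list obtained by rescaling $Q$ to the unit cube; one subdivides every non-OK cube. The subdivision terminates since, once $Q$ is small enough that $(C_0 Q)\cap\bar E$ is rigid relative to the shorter list, $Q$ becomes OK. The maximal OK cubes $\{Q_\nu\}$ then tile $Q^\circ$ with the usual good geometry (neighbours of comparable size, bounded overlap of the dilates $Q_\nu^\ast$), and on each $Q_\nu$ the inductive hypothesis provides a local solution $F_\nu\in C^{m,\omega}$ with $\|F_\nu\|\le C$ and $J_x^m F_\nu\in\bar f(x)+C\bar\sigma(x)$ for $x$ in a neighbourhood of $Q_\nu$ meeting $\bar E$. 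Choosing a Whitney partition of unity $\{\theta_\nu\}$ subordinate to $\{Q_\nu^\ast\}$ with $|\partial^\alpha\theta_\nu|\le C\,\mathrm{diam}(Q_\nu)^{-|\alpha|}$, I set $G=\sum_\nu\theta_\nu F_\nu$.

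The remaining work is to verify the two estimates, and this is where the main obstacle lies. For $\|G\|_{C^{m,\omega}}\le C$ one needs that neighbouring local solutions are \emph{compatible}: for adjacent $Q_\mu,Q_\nu$ and $x\in Q_\nu$, $|\partial^\alpha(F_\mu-F_\nu)(x)|\le C\,\omega(\delta)\,\delta^{m-|\alpha|}$ with $\delta\sim\mathrm{diam}(Q_\nu)$. Establishing this uniformly is the technically delicate point: it is forced by the finiteness hypothesis --- via condition (iii) applied to small sets of points straddling the two cubes --- together with the way the $F_\nu$ are produced by the shorter-list inductive hypothesis, which pins down their jets tightly enough; arranging this consistently across the whole decomposition is precisely what makes $k^\#$ large. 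Granted compatibility, the standard Whitney-patching inequality gives the $C^{m,\omega}$ bound. For the jet membership, fix $x\in Q_\nu\cap\bar E$ and write $J_x^m G-J_x^m F_\nu=\sum_{\mu}(J_x^m\theta_\mu)\odot_{m,n}^x(J_x^m F_\mu-J_x^m F_\nu)$ over the $O(1)$ indices $\mu$ with $x\in Q_\mu^\ast$ (using $\sum_\mu\theta_\mu\equiv 1$ near $x$). Each difference $J_x^m F_\mu-J_x^m F_\nu$ lies in $2C\bar\sigma(x)$ and has $\alpha$-derivative at $x$ bounded by $C\,\omega(\delta)\,\delta^{m-|\alpha|}$ by compatibility, while $J_x^m\theta_\mu$ has $\alpha$-derivative at $x$ bounded by $C\,\delta^{-|\alpha|}$; so the defining property of Whitney $\omega$-convexity, applied at scale $\delta$, gives $(J_x^m\theta_\mu)\odot_{m,n}^x(J_x^m F_\mu-J_x^m F_\nu)\in C\bar A\,\bar\sigma(x)$, and summing the $O(1)$ terms yields $J_x^m G-J_x^m F_\nu\in C\bar A\,\bar\sigma(x)$; combined with $J_x^m F_\nu\in\bar f(x)+C\bar\sigma(x)$ this gives $J_x^m G\in\bar f(x)+C(\bar A)\bar\sigma(x)$, as required. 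The genuinely hard part throughout is the bookkeeping of the induction on the list $\mathcal A$: defining the lists, proving that localizing and rescaling an OK cube strictly shortens the list, and keeping every constant uniform --- this is the substance of \cite{F2}.
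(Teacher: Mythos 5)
This theorem is not proved in the paper at all: it is quoted verbatim from \cite{F2} as a known external input (``Theorem~3 in \cite{F2}'') and used as a black box, so there is no in-paper proof to compare your attempt against. What you have written is a high-level recapitulation of the strategy of \cite{F2} itself --- induction over a finite family of ``labels'' attached to the sets $\bar\sigma(x)$, a Calder\'on--Zygmund stopping-time decomposition into OK cubes, Whitney patching, and the use of Whitney $\omega$-convexity to propagate the jet-membership $J_x^m G\in\bar f(x)+C\bar\sigma(x)$ through the partition of unity. That strategic outline is faithful to the source.

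It is, however, a sketch and not a proof, and the gap is not incidental. The inductive object $\mathcal A$ is never defined: you do not say what it means for $\bar\sigma(x)$ to be ``compatible with $\mathcal A$,'' why such a label exists and is well-behaved under restriction to a cube and rescaling, or why rescaling an OK cube strictly shortens it --- that monotonicity is what makes the induction terminate and is the entire reason a finite $k^\#$ exists. More seriously, the compatibility estimate $|\partial^\alpha(F_\mu-F_\nu)(x)|\le C\,\omega(\delta)\,\delta^{m-|\alpha|}$ for neighbouring stopping-time cubes --- which you yourself identify as ``the technically delicate point'' --- is asserted to be ``forced by the finiteness hypothesis'' but never derived. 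Without it, neither the $C^{m,\omega}$ bound on $G=\sum_\nu\theta_\nu F_\nu$ nor the Whitney-$\omega$-convexity step that controls $J_x^m G-J_x^m F_\nu$ goes through. You are honest that ``this is the substance of \cite{F2},'' but that is precisely an acknowledgment that the argument has not been given. Since the theorem is cited rather than reproved in this paper, the correct move for the present manuscript is simply to cite \cite{F2}; if you wish to actually prove it, the missing pieces are the formal definition and ordering of the labels, the monotone decrease of the label under rescaling of an OK cube, and the compatibility lemma for adjacent local solutions --- each of which carries real content.
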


\begin{remark}\label{finitenesscredit}
In \cite{brudnyi3}, Brudnyi-Shvartsman showed Theorem \ref{Thm2} for $C^{1,\omega}(\mathbb{R}^n)$ and $\bar{\sigma}(x) \equiv 0$ (for all $x \in E$) with the sharp constant $k^{\#}=3 \times 2^{n-1}$. Theorem \ref{Thm2} was first conjectured by Brudnyi and Shvartsman. It was proven by C. Fefferman \cite{F2} with a large constant $k^{\#}$ (depending only on $m$ and $n$). Later, Bierstone-Milman in \cite{Bierstone-Milman0} and Shvartsman in \cite{Pavel} independently improved the constant $k^{\#}$ in the case $\bar{\sigma}(x) \equiv 0$ by showing that $k^{\#}=2^{\dim (\mathcal{P}_{m,n})}$ is sufficient. For Whitney $\omega$-convex sets $\bar{\sigma}$, Shvartsman in \cite{Pavel} also showed that Theorem \ref{Thm2} holds with $k^{\#}=2^{\min \{l+1,\dim \mathcal{P}_{m,n} \}}$, where $l=\max_{x \in \bar{E}}\dim \bar{\sigma}(x)$.
\end{remark}

For each $x\in E$, define
\begin{equation*}
\hat{\sigma}\left( \left( x,0\right) \right) =\left\{ \hat{P}\in \mathcal{R}%
_{m+1,n+d}^{\left( x,0\right) }:\left. \hat{P}\right\vert _{\hat{v}=0}\equiv
0\text{ and }\left( [\partial _{\hat{v}_{1}}\hat{P}]|_{\hat{v}=0},\ldots
,[\partial _{\hat{v}_{d}}\hat{P}]|_{\hat{v}=0}\right) \in \sigma \left(
x\right) \right\} \text{,}
\end{equation*}%
where $\sigma (x)\subseteq (\mathcal{P}_{m,n})^{d}$ is as given in Theorem %
\ref{Thm1}.

\begin{lem}
$\hat{\sigma}\left( \left( x,0\right) \right) $ is Whitney $\omega $-convex
in $\mathcal{R}_{m+1,n+d}^{\left( x,0\right) }$ with Whitney constant $A$.
\end{lem}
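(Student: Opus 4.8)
The plan is to verify the two defining conditions of Whitney $\omega$-convexity for $\hat{\sigma}((x,0))$ directly, by transferring them to the corresponding conditions already known to hold for $\sigma(x)$. The passage from an $(m+1)$-jet $\hat{P}$ on $\mathbb{R}^{n+d}$ vanishing on $\{\hat v = 0\}$ to the $d$-tuple of $m$-jets $([\partial_{\hat v_1}\hat P]|_{\hat v=0},\ldots,[\partial_{\hat v_d}\hat P]|_{\hat v=0})$ is the linear map that will do all the work; call it $\Phi$. Closedness, convexity, and symmetry of $\hat{\sigma}((x,0))$ are immediate: $\hat\sigma((x,0))$ is the preimage under $\Phi$ of $\sigma(x)$ intersected with the linear subspace $\{\hat P : \hat P|_{\hat v=0}\equiv 0\}$, and $\Phi$ is linear, so these properties descend from $\sigma(x)$ (which has them by hypothesis, being Whitney $\omega$-convex) together with the fact that a linear subspace is closed, convex, and symmetric.

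The substance is the second bullet: given $\hat P \in \hat\sigma((x,0))$, a jet $\hat Q \in \mathcal{R}^{(x,0)}_{m+1,n+d}$, and $\delta \in (0,1]$ with $|\partial^{A}\hat P(x,0)| \le \omega(\delta)\delta^{(m+1)-|A|}$ for $|A|\le m+1$ and $|\partial^{A}\hat Q(x,0)| \le \delta^{-|A|}$ for $|A|\le m+1$ (here $A$ is a multi-index on $\mathbb{R}^{n+d}$), we must show $\Phi(\hat P \odot^{(x,0)}_{m+1,n+d}\hat Q) \in A\hat\sigma((x,0))$ — equivalently, that $\hat P \odot \hat Q$ still vanishes on $\{\hat v=0\}$ (clear, since $\hat P$ does and the product of jets of functions is the jet of the product) and that $\Phi(\hat P\odot\hat Q) \in A\,\sigma(x)$. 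The key computation is to identify $\Phi(\hat P\odot\hat Q)$ in terms of $\Phi(\hat P)$ and the restriction of $\hat Q$. Writing $P_j := [\partial_{\hat v_j}\hat P]|_{\hat v=0}$ and $Q_0 := \hat Q|_{\hat v=0}$ (an $m$-jet, indeed an $(m+1)$-jet truncated, on $\mathbb{R}^n$), the product rule gives $\partial_{\hat v_j}(\hat P\hat Q) = (\partial_{\hat v_j}\hat P)\hat Q + \hat P(\partial_{\hat v_j}\hat Q)$; restricting to $\hat v=0$ and using $\hat P|_{\hat v=0}=0$ kills the second term, so $[\partial_{\hat v_j}(\hat P\odot\hat Q)]|_{\hat v=0} = P_j \odot^x_{m,n} Q_0$ at the level of $m$-jets at $x$. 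Thus $\Phi(\hat P\odot\hat Q) = (P_1\odot^x_{m,n}Q_0,\ldots,P_d\odot^x_{m,n}Q_0)$.

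Now I check that $(P_1,\ldots,P_d)=\Phi(\hat P) \in \sigma(x)$ and $Q_0 \in \mathcal{R}^x_{m,n}$ satisfy exactly the hypotheses of the Whitney $\omega$-convexity of $\sigma(x)$ with the same $\delta$. For $\Phi(\hat P)$: for a multi-index $\alpha$ on $\mathbb{R}^n$ with $|\alpha|\le m$, $\partial^\alpha P_j(x) = \partial^\alpha_{\hat x}\partial_{\hat v_j}\hat P(x,0)$, which is a derivative of $\hat P$ of total order $|\alpha|+1 \le m+1$, so the hypothesis on $\hat P$ gives $|\partial^\alpha P_j(x)| \le \omega(\delta)\delta^{(m+1)-(|\alpha|+1)} = \omega(\delta)\delta^{m-|\alpha|}$, as required. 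For $Q_0$: for $|\alpha|\le m$, $\partial^\alpha Q_0(x) = \partial^\alpha_{\hat x}\hat Q(x,0)$, a derivative of order $|\alpha| \le m \le m+1$, so $|\partial^\alpha Q_0(x)| \le \delta^{-|\alpha|}$, again as required. Applying the Whitney $\omega$-convexity of $\sigma(x)$ (constant $A$) yields $(P_1\odot^x_{m,n}Q_0,\ldots,P_d\odot^x_{m,n}Q_0) \in A\,\sigma(x)$, i.e. $\Phi(\hat P\odot\hat Q) \in A\,\sigma(x)$, and combined with the vanishing on $\{\hat v=0\}$ this says $\hat P\odot^{(x,0)}_{m+1,n+d}\hat Q \in A\,\hat\sigma((x,0))$. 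The only point demanding care — and the one I'd expect to be the main (minor) obstacle — is the bookkeeping of multi-indices when splitting an index on $\mathbb{R}^{n+d}$ into its $\hat x$-part and $\hat v$-part and tracking how the ``one derivative in $\hat v$'' shifts the order from $m+1$ down to $m$; once that is set up, every estimate falls out with no loss of constants, so $\hat\sigma((x,0))$ is Whitney $\omega$-convex with the same constant $A$.
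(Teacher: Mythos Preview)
Your proof is correct and follows essentially the same route as the paper's: both verify closedness/convexity/symmetry trivially, then use the product rule together with $\hat P|_{\hat v=0}\equiv 0$ to identify $[\partial_{\hat v_j}(\hat P\odot\hat Q)]|_{\hat v=0}$ with $P_j\odot^x_{m,n}(\pi_m\hat Q|_{\hat v=0})$, and finally transfer the derivative bounds (shifting order by one via the $\hat v_j$-derivative) to invoke the Whitney $\omega$-convexity of $\sigma(x)$ with the same constant $A$. Aside from a harmless slip where you write $\Phi(\hat P\odot\hat Q)\in A\hat\sigma((x,0))$ but immediately restate the correct condition, and your informal handling of the truncation that the paper makes explicit via $\pi_m$, the arguments are identical.
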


\begin{proof}
That $\hat{\sigma}\left( \left( x,0\right) \right) $ is closed, convex, and
symmetric follows directly from the definition of $\hat{\sigma}\left( \left(
x,0\right) \right) $ and the fact that $\sigma \left( x\right) $ is Whitney $%
\omega $-convex.

Suppose $\hat{P}\in \hat{\sigma}\left( \left( x,0\right) \right) $ and $\hat{%
Q}\in \mathcal{R}_{m+1,n+d}^{\left( x,0\right) }$.

Assume
\begin{equation}
\left|\partial^{\gamma}\hat{P}\left(x,0\right)\right|\leq\omega\left(\delta%
\right)\delta^{m+1-\left|\gamma\right|}  \label{31}
\end{equation}
and
\begin{equation}
\left|\partial^{\gamma}\hat{Q}\left(x,0\right)\right|\leq\delta^{-\left|%
\gamma\right|}  \label{32}
\end{equation}
for $\left|\gamma\right|\leq m+1$.

We need to show that $\hat{P}\odot _{m+1,n+d}^{(x,0)}\hat{Q}\in A\hat{\sigma}%
\left( \left( x,0\right) \right) $, where $\odot _{m+1,n+d}^{(x,0)}$ denotes
the multiplication in $\mathcal{R}_{m+1,n+d}^{\left( x,0\right) }$.

First of all, we have
\begin{equation}
\left. \left[\hat{P}\odot _{m+1,n+d}^{(x,0)}\hat{Q}\right]\right\vert _{\hat{v}=0}\equiv
\left. \hat{P}\right\vert _{\hat{v}=0}\odot _{m+1,n}^{x}\left. \hat{Q}%
\right\vert _{\hat{v}=0}\equiv 0,  \label{whitneyconvex1}
\end{equation}%
since $\left. \hat{P}\right\vert _{\hat{v}=0}\equiv 0$ by virtue of the fact
that $\hat{P}\in \hat{\sigma}\left( (x,0)\right) $.

Let $\pi _{m}:\mathcal{R}_{m+1,n+d}^{\left( x,0\right) }\rightarrow \mathcal{%
R}_{m,n+d}^{\left( x,0\right) }$ be the natural projection. We have
\begin{eqnarray}
&&\left. \left( \partial _{\hat{v}_{1}}\left[ \hat{P}\odot _{m+1,n+d}^{(x,0)}%
\hat{Q}\right] ,\ldots ,\partial _{\hat{v}_{d}}\left[ \hat{P}\odot
_{m+1,n+d}^{(x,0)}\hat{Q}\right] \right) \right\vert _{\hat{v}=0}  \notag \\
&=&\left( \partial _{\hat{v}_{1}}\hat{P}|_{\hat{v}=0}\odot _{m,n}^{x}\pi _{m}%
\hat{Q}|_{\hat{v}=0}+\pi _{m}\hat{P}|_{\hat{v}=0}\odot _{m,n}^{x}\partial _{%
\hat{v}_{1}}\hat{Q}|_{\hat{v}=0},\ldots ,\right.  \notag \\
&&\qquad \left. \partial _{\hat{v}_{d}}\hat{P}|_{\hat{v}=0}\odot
_{m,n}^{x}\pi _{m}\hat{Q}|_{\hat{v}=0}+\pi _{m}\hat{P}|_{\hat{v}=0}\odot
_{m,n}^{x}\partial _{\hat{v}_{d}}\hat{Q}|_{\hat{v}=0}\right)  \notag \\
&=&\left( \partial _{\hat{v}_{1}}\hat{P}|_{\hat{v}=0}\odot _{m,n}^{x}\pi _{m}%
\hat{Q}|_{\hat{v}=0},\ldots ,\partial _{\hat{v}_{d}}\hat{P}|_{\hat{v}%
=0}\odot _{m,n}^{x}\pi _{m}\hat{Q}|_{\hat{v}=0}\right) \text{, since }\left.
\hat{P}\right\vert _{\hat{v}=0}\equiv 0\text{. }  \label{0}
\end{eqnarray}

We write $\partial ^{\gamma }=\partial _{\hat{x}}^{\alpha }\partial _{\hat{v}%
}^{\beta }$. If $\beta =1$, from (\ref{31}), we have $\left\vert \partial _{%
\hat{x}}^{\alpha }\partial _{\hat{v}}^{\beta }\hat{P}\left( x,0\right)
\right\vert \leq \omega \left( \delta \right) \delta ^{m-\left\vert \alpha
\right\vert }$. Therefore, we have
\begin{equation}
\left\vert \partial _{\hat{x}}^{\alpha }\left[ \left(\partial _{\hat{v}_{j}}\hat{P}\right)%
|_{\hat{v}=0}\right] \left( x\right) \right\vert \leq \omega \left( \delta
\right) \delta ^{m-\left\vert \alpha \right\vert }\mbox{ for }1\leq j\leq
d,\left\vert \alpha \right\vert \leq m.  \label{33}
\end{equation}

From (\ref{32}), we have
\begin{equation}
\left\vert \partial _{\hat{x}}^{\alpha }\left[ \pi _{m}\hat{Q}|_{\hat{v}=0}%
\right] \left( x\right) \right\vert =\left\vert \partial _{\hat{x}}^{\alpha }%
\hat{Q}\left( x,0\right) \right\vert \leq \delta ^{-\left\vert \alpha
\right\vert }\mbox{ for }\left\vert \alpha \right\vert \leq m.  \label{34}
\end{equation}%
From (\ref{0}), (\ref{33}), (\ref{34}), and the assumption that $\sigma
\left( x\right) $ is Whitney $\omega $-convex (in $\left( \mathcal{P}%
_{m,n}\right) ^{d}$) with Whitney constant $A$, we conclude that
\begin{equation*}
\left( \left. \partial _{\hat{v}_{1}}\left( \hat{P}\odot _{m+1,n+d}^{\left(
x,0\right) }\hat{Q}\right) \right\vert _{\hat{v}=0},\ldots ,\left. \partial
_{\hat{v}_{d}}\left( \hat{P}\odot _{m+1,n+d}^{\left( x,0\right) }\hat{Q}%
\right) \right\vert _{\hat{v}=0}\right) \in A\sigma \left( x\right) .
\end{equation*}%
This together with (\ref{whitneyconvex1}) shows that $\hat{P}\odot
_{m+1,n+d}^{\left( x,0\right) }\hat{Q}\in A\hat{\sigma}\left( \left(
x,0\right) \right) $.

This concludes the proof.
\end{proof}

Given an $\mathbb{R}^{d}$-valued m-jet $\vec{f}\left( x\right) =\left(
f_{1}\left( x\right) ,\ldots ,f_{d}\left( x\right) \right) \in \left(
\mathcal{P}_{m,n}\right) ^{d}$ for each $x\in E$, we define for each point $%
\left( x,0\right) \in E\times \left\{ 0\right\} \subset \mathbb{R}^{n}\times
\mathbb{R}^{d}$ a real-valued $\left( m+1\right) $-jet $\hat{f}\left(
x,0\right) \in \mathcal{R}_{m+1,n+d}^{\left( x,0\right) }$ by
\begin{equation}
\hat{f}{\left( x,0\right) }\left( \hat{x},\hat{v}\right) =\sum_{j=1}^{d}\hat{%
v}_{j}\left[f_{j}\left( x\right) \left( \hat{x}\right) \right]\text{.}
\label{transformedfunction}
\end{equation}

We will check that the assumption in Theorem \ref{Thm2} is satisfied with $%
E\times \{0\}$, $m+1$, $n+d$, $\hat{f}{\left( x,0\right) }$, $\hat{\sigma}%
\left( \left( x,0\right) \right) $ in place of $\bar{E}$, $m,$ $n,$ $\bar{f}%
(x),$ $\bar{\sigma}\left( x\right) $, respectively.

Toward this end, we let $S\times\left\{ 0\right\} \subseteq E\times\left\{
0\right\} $ with $\#\left(S\right)\leq k^{\#}$ and show that there exists a
map $\left(x,0\right)\longmapsto\hat{P}^{\left(x,0\right)}$ from $%
S\times\left\{ 0\right\} \rightarrow\mathcal{R}_{m+1,n+d}^{\left(x,0\right)}$
such that

\begin{itemize}
\item $\hat{P}^{x}-\hat{f}{\left( x,0\right) }\in M\hat{\sigma}\left( \left(
x,0\right) \right) $ for all $x\in S$;

\item $\left\vert \partial^{\gamma}\hat{P}^{\left(x,0\right)}\left(x,0%
\right)\right\vert \leq M$ for all $x\in S$, $\left\vert \gamma\right\vert
\leq m+1$;

\item $\left\vert \partial^{\gamma}\left(\hat{P}^{\left(x,0\right)}-\hat{P}%
^{\left(y,0\right)}\right)\left(x,0\right)\right\vert \leq
M\omega\left(\left\vert x-y\right\vert \right)\left\vert x-y\right\vert
^{m+1-\left\vert \gamma\right\vert }$ for $\left\vert \gamma\right\vert \leq
m+1$ and $\left\vert x-y\right\vert \leq1$, $x,y\in S$.
\end{itemize}

By the assumption of Theorem \ref{Thm1}, there exists $\vec{F}^{S}\in
C^{m,\omega }\left( \mathbb{R}^{n},\mathbb{R}^{d}\right) $ such that (i) $%
J_{x}^{m}\vec{F}^{S}\in f(x)+M\sigma \left( x\right) $ for each $x\in S$ and
(ii) $\left\Vert \vec{F}^{S}\right\Vert _{C^{m,\omega }\left( \mathbb{R}^{n},%
\mathbb{R}^{d}\right) }\leq M$.

For each $x\in S$, we consider the $\mathbb{R}^{d}$-valued $m$-jet $J_{x}^{m}%
\vec{F}^{S}\equiv \left( P_{1}^{x},\ldots ,P_{d}^{x}\right) $ and define a
real-valued $\left( m+1\right) $-jet at $\left( x,0\right) $ by
\begin{equation}
\hat{P}^{\left( x,0\right) }\left( \hat{x},\hat{v}\right) \equiv
\sum_{j=1}^{d}\hat{v}_{j}P_{j}^{x}\left( \hat{x}\right) \text{.}  \label{3.a}
\end{equation}

We will show that $\hat{P}^{\left( x,0\right) }$ defined above satisfies the
three bullet points.

By the definitions of $\hat{f}\left( x,0\right) $ and $\hat{P}^{\left(
x,0\right) }$ (see \eqref{transformedfunction} and \eqref{3.a}), we have
\begin{equation}
\left. \left[ \hat{P}^{\left( x,0\right) }-\hat{f}\left( x,0\right) \right]
\right\vert _{\hat{v}=0}\equiv 0.  \label{6.a}
\end{equation}%
Furthermore, we have
\begin{eqnarray}
&&\left( \left. \left( \partial _{\hat{v}_{1}}\left[ \hat{P}^{\left(
x,0\right) }-\hat{f}{\left( x,0\right) }\right] \right) \right\vert _{\hat{v}%
=0},\ldots ,\left. \left( \partial _{\hat{v}_{d}}\left[ \hat{P}^{\left(
x,0\right) }-\hat{f}{\left( x,0\right) }\right] \right) \right\vert _{\hat{v}%
=0}\right)  \notag \\
&=&\left( P_{1}^{x}-f_{1}^{x},\ldots ,P_{d}^{x}-f_{d}^{x}\right) \in M\sigma
\left( x\right) \text{, since }J_{x}^{m}\vec{F}^{S}\in \vec{f}(x)+M\sigma
\left( x\right) \text{.}  \label{7.a}
\end{eqnarray}%
This together with (\ref{6.a}) proves the first bullet point.

For the second and third bullet points, we write $\partial ^{\gamma
}=\partial _{\hat{x}}^{\alpha }\partial _{\hat{v}}^{\beta }$.

From (\ref{3.a}), we see that
\begin{equation}
\partial _{\hat{v}}^{\beta }\hat{P}^{\left( x,0\right) }|_{\hat{v}=0}\equiv 0%
\mbox{ if}\left\vert \beta \right\vert \not=1.  \label{4.a}
\end{equation}

Thanks to (\ref{4.a}), we have $\left\vert \partial _{\hat{x}}^{\alpha
}\partial _{\hat{v}}^{\beta }\hat{P}^{\left( x,0\right) }\left( x,0\right)
\right\vert =0$ if $\left\vert \beta \right\vert \not=1$; by (\ref{3.a}), we
have $\left\vert \partial _{\hat{x}}^{\alpha }\partial _{\hat{v}}^{\beta }%
\hat{P}^{\left( x,0\right) }\left( x,0\right) \right\vert =\left\vert
\partial _{\hat{x}}^{\alpha }P_{j}^{x}\left( x\right) \right\vert \leq M$
for some $1\leq j \leq d$ if $\left\vert \beta \right\vert =1$ and $%
\left\vert \alpha \right\vert \leq m$. (Here we used the assumption that $%
\left\Vert \vec{F}^{S}\right\Vert _{C^{m,\omega }\left( \mathbb{R}^{n},%
\mathbb{R}^{d}\right) }\leq M$.) To wit, we have $\left\vert \partial
^{\gamma }\hat{P}^{\left( x,0\right) }\left( x,0\right) \right\vert \leq M$
for all $x\in S$, $\left\vert \gamma \right\vert \leq m+1$. This shows the
second bullet point.

To prove the third bullet point, we consider two cases: $\left\vert \beta
\right\vert \not=1$ and $\left\vert \beta \right\vert =1$. For $x,y\in S$
and $\left\vert \beta \right\vert \not=1$, in view of (\ref{4.a}), we see
that
\begin{equation}
\left\vert \partial ^{\gamma }\left( \hat{P}^{\left( x,0\right) }-\hat{P}%
^{\left( y,0\right) }\right) \left( x,0\right) \right\vert =0\text{,}
\label{w-1}
\end{equation}%
which trivially implies the third bullet point. Now, for $\left\vert \beta
\right\vert =1$ and for all $x,y\in S$ with $|x-y|\leq 1$, we have
\begin{eqnarray}
&&\left\vert \partial ^{\gamma }\left( \hat{P}^{\left( x,0\right) }-\hat{P}%
^{\left( y,0\right) }\right) \left( x,0\right) \right\vert  \notag \\
&=&\left\vert \partial _{\hat{x}}^{\alpha }\left( P_{j}^{x}-P_{j}^{y}\right)
\left( x\right) \right\vert \text{ for some }1\leq j\leq d,  \notag \\
&\leq &M\omega \left( \left\vert x-y\right\vert \right) \left\vert
x-y\right\vert ^{m-\left\vert \alpha \right\vert }\text{, since }\left\Vert
\vec{F}^{S}\right\Vert _{C^{m,\omega }\left( \mathbb{R}^{n},\mathbb{R}%
^{d}\right) }\leq M,  \notag \\
&\leq &M\omega \left( \left\vert x-y\right\vert \right) \left\vert
x-y\right\vert ^{m+1-\left\vert \gamma \right\vert }\text{, since }%
\left\vert \gamma \right\vert =1+\left\vert \alpha \right\vert \text{ when }%
\left\vert \beta \right\vert =1\text{.}  \label{w-2}
\end{eqnarray}
From (\ref{w-1}) and (\ref{w-2}), we obtain the third bullet point.

We have verified the three bullet points; by Theorem \ref{Thm2}, we can
conclude that there exists $G\in C^{m+1,\omega}\left(\mathbb{R}^{n+d}\right)$
such that

\begin{enumerate}
\item $\left\Vert G\right\Vert _{C^{m+1,\omega}\left(\mathbb{R}%
^{n+d}\right)}\leq CM$.

\item $J_{\left( x,0\right) }^{m+1}G-\hat{f}{\left( x,0\right) }\in CM\hat{%
\sigma}\left( \left( x,0\right) \right) $ for all $x\in E$.
\end{enumerate}

Here, $C$ depends only on $A$, $m$, $n$, and $d$. By the definition of $\hat{%
\sigma}\left( \left( x,0\right) \right) $ and (\ref{transformedfunction}),
we see that $G\left( \hat{x},\hat{v}\right) :\mathbb{R}^{n+d}\rightarrow
\mathbb{R}$ satisfies

\begin{enumerate}
\item $G\left(\hat{x},0\right)\equiv0$.

\item $\left(J_{x}^{m}[\partial_{\hat{v}_{1}}G|_{\hat{v}_1=0}],%
\ldots,J_{x}^{m}[\partial_{\hat{v}_{d}}G|_{\hat{v}=0}]\right)\in
\vec{f}(x)+MC\sigma\left(x\right)$ for all $x\in E$.

\item $\left\Vert G\right\Vert _{C^{m+1,\omega}\left(\mathbb{R}%
^{n+d}\right)}\leq CM$, for some constant $C$ depending only on $m$, $n$, $d$%
, and $A$.
\end{enumerate}

In view of the remarks at the beginning of this section, we have proven
Theorem \ref{Thm1}.

\section{Proof of the $C^{m}$ Extension Theorem}

\label{finitenessforcm} In this section, we prove Theorem \ref{Thm3}. The
relevant terminology is given in the introduction.

Before we embark on the proof of Theorem \ref{Thm3}, we recall the following
result (Theorem 2 in \cite{F1}).

\begin{thm}[$C^{m}$ Extension Theorem for Real-valued Functions,
C. Fefferman\protect\cite{F1}]
\label{feff-thm1} Let $\bar{E}\subset \mathbb{R}^{n}$ be compact. Suppose
for each $x\in\bar{E}$ we are given an affine subspace $\bar{H}%
\left(x\right)\subseteq\mathcal{R}_{m,n}^{x}$ having the form $\bar{H}%
\left(x\right)=\bar{f}\left(x\right)+\bar{I}\left(x\right)$, where $\bar{f}%
\left(x\right)\in\mathcal{R}_{m,n}^{x}$ and $\bar{I}\left(x\right)$ is an
ideal in $\mathcal{R}_{m,n}^{x}$. Assume that $\{\bar{H}\left(x\right)\}_{x\in \bar{E}}$ is its
own Glaeser refinement. Then there exists $\bar{F}%
\in C^{m}\left(\mathbb{R}^{n}\right)$ with $J_{x}^{m}\bar{F}\in\bar{H}%
\left(x\right)$ for all $x\in E$.
\end{thm}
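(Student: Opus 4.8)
The plan is to reduce Theorem~\ref{feff-thm1} to a local, scale‑invariant ``Main Lemma'' and prove that lemma by a double induction; this is the route of \cite{F1}, and I sketch its architecture.

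\emph{Localization and patching.} By a partition of unity subordinate to a locally finite cover of a bounded neighborhood of the compact set $\bar{E}$ (and cutting off to $0$ far from $\bar E$), it suffices to produce, for each $x_0\in\bar E$, a radius $\rho=\rho(x_0)>0$ and a function $F_{x_0}\in C^m(\mathbb{R}^n)$ with $J_x^m F_{x_0}\in\bar H(x)$ for all $x\in\bar E\cap B(x_0,\rho)$, together with \emph{uniform} $C^m$-bounds and ``consistency'' bounds $|\partial^\beta(F_{x_0}-F_{x_1})(y)|\le C|x_0-x_1|^{\,m-|\beta|}$ on overlapping balls; the classical Whitney extension theorem, applied to the resulting Whitney field, then glues the $F_{x_0}$ into one $F\in C^m(\mathbb{R}^n)$. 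The reason the glued $F$ lands back in the fibers is purely algebraic: if $F=\sum_\nu\theta_\nu F_\nu$ with $\sum_\nu\theta_\nu\equiv 1$ and each $J_x^m F_\nu\in\bar H(x)=\bar f(x)+\bar I(x)$ for the relevant $x$, then for any $\nu_0$ with $x\in\operatorname{supp}\theta_{\nu_0}$ we have $J_x^m F-J_x^m F_{\nu_0}=\sum_\nu (J_x^m\theta_\nu)\odot_{m,n}^x\bigl(J_x^m F_\nu-J_x^m F_{\nu_0}\bigr)\in\bar I(x)$, since $\bar I(x)$ is an ideal and each $J_x^m F_\nu-J_x^m F_{\nu_0}\in\bar I(x)$; hence $J_x^m F\in\bar H(x)$. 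So the entire difficulty sits in the \emph{construction and norm control} of the local solutions.

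\emph{The induction.} For the local solutions I would prove a scale‑invariant Main Lemma by induction on the pair $(n,\mathcal{A})$, where $\mathcal{A}$ ranges over monotone subsets (order filters for the coordinatewise partial order) of $\{\alpha\in\mathbb{Z}_{\ge 0}^n:|\alpha|\le m\}$, equipped with a suitable well‑ordering, and $\mathcal{A}$ records the ``type'' of the ideals $\bar I(x)$ near $x_0$ (roughly, their leading exponents relative to a generic coordinate system). The inductive step splits in two. (a) \emph{Dimension reduction}: if some nearby fiber's ideal contains a nonzero degree‑one polynomial, then, by the Glaeser‑stability hypothesis, the ideals of all sufficiently nearby fibers ``almost contain'' that direction; after a linear change of coordinates one quotients out the corresponding variable, applies the inductive hypothesis in dimension $n-1$ on a hyperplane slice, and extends essentially constantly in the collapsed direction. (b) \emph{Calder\'on--Zygmund step}: otherwise, make a Calder\'on--Zygmund (Whitney) decomposition of a fixed cube $Q\ni x_0$ into dyadic subcubes $Q_\nu$, the stopping rule for $Q_\nu$ being that either $Q_\nu$ is comparable to its distance from $\bar E$, or the restricted data on a fixed dilate of $Q_\nu$ has type strictly smaller than $\mathcal{A}$; on each $Q_\nu$ one then invokes the inductive hypothesis (a smaller $\mathcal{A}'$, or smaller $n$), or, if $Q_\nu$ is far from $\bar E$, simply takes a single polynomial drawn from a representative jet; finally one patches the resulting $F_\nu$ with a partition of unity adapted to $\{Q_\nu\}$, exactly as above. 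The base cases ($n=0$, or $\mathcal{A}$ minimal) reduce to the classical Whitney extension theorem.

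\emph{Where stability enters; the main obstacle.} The Glaeser‑stability hypothesis is used precisely to guarantee, at each scale and location arising in the Calder\'on--Zygmund construction, the existence of jets $\bar P_1,\dots,\bar P_{k^{\#}}$ on a short list of nearby points of $\bar E$ that agree to order $m$ up to an error as small as we like times the appropriate power of the spacing; this short‑range coherence is what lets adjacent local solutions be chosen consistent, which is exactly what the patching estimates require — and it is also what fails for a non‑stable bundle, so the hypothesis cannot be dropped. I expect the main obstacle to be the bookkeeping that makes the induction close: one must (i) define the ``type'' $\mathcal{A}$ and the stopping rule so that the subcubes genuinely carry a strictly smaller type — a nontrivial combinatorial fact about how imposing coherence constraints enlarges a monomial ideal — and (ii) propagate the quantitative $C^m$ bounds through the finitely many induction steps without the constants blowing up, which forces a carefully normalized statement of the Main Lemma (with auxiliary parameters restricting the admissible fibers) and sharp estimates on the interplay between partition‑of‑unity derivatives and subcube diameters.
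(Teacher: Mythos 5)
The paper does not prove this theorem: it is quoted verbatim as Theorem~2 of \cite{F1} and is consumed as a black box in the proof of Theorem~\ref{Thm3} (Remark~\ref{sharpconstant} comments only on the size of $k^{\#}$). So there is no ``paper's own proof'' to compare against. What you have produced is a high-level reconstruction of the architecture of \cite{F1}, and as such it is a reasonable outline: localization and gluing by a partition of unity, with the algebraic observation that ideals absorb the cross-terms $\sum_\nu (J_x^m\theta_\nu)\odot_{m,n}^x(J_x^m F_\nu - J_x^m F_{\nu_0})$; a scale-invariant Main Lemma; induction in $n$ and in a label $\mathcal{A}$ recording a monomial-ideal ``type'' attached to the fibers; a dichotomy between a codimension-one reduction and a Calder\'on--Zygmund stopping-time decomposition; and the Glaeser-stability hypothesis as the source of short-range coherence. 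You also correctly flag that the hard part of \cite{F1} is making the $\mathcal{A}$-induction close and propagating constants through the patching, and you are candid that your sketch does not do that work.

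None of this is an argument the present paper carries out or needs to. The paper's contribution in Section~\ref{finitenessforcm} is the reduction of the vector-valued Theorem~\ref{Thm3} to Theorem~\ref{feff-thm1}: one lifts $\mathbb{R}^d$-valued $m$-jets on $E\subset\mathbb{R}^n$ to real-valued $(m+1)$-jets on $E\times\{0\}\subset\mathbb{R}^{n+d}$ via $\hat f((x,0))=\sum_{j=1}^d \hat v_j P_j^x(\hat x)$, checks that the lifted $\hat I((x,0))$ is an ideal in $\mathcal{R}^{(x,0)}_{m+1,n+d}$, and checks that the lifted bundle is again its own Glaeser refinement. If you want to engage with this paper rather than re-derive its cited input \cite{F1}, that lifting argument, rather than Theorem~\ref{feff-thm1} itself, is where the effort should go.
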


\begin{remark}\label{sharpconstant}
In \cite{F1}, C. Fefferman showed that the large constant $k^{\#}$ appearing implicitly in Theorem \ref{feff-thm1} can be bounded by a constant depending only on $m$ and $n$. Later, in \cite{Bierstone-Milman0}, Bierstone-Milman gave a sharper upper bound on $k^{\#}$ in the case $\bar{I}(x) \equiv \{ 0 \}$: In that case, they showed that $k^{\#} = 2^{\dim (\mathcal{P}_{m,n})}$ is sufficient. Compare with Remark \ref{finitenesscredit}.
\end{remark}

\begin{proof}[Proof of Theorem \protect\ref{Thm3}]
We recall that each fiber of the given bundle $\vec{\mathcal{H}}=(\vec{H}%
(x))_{x\in E}$ takes the form
\begin{equation*}
\vec{H}(x)=\vec{P}^{x}+\vec{I}(x),
\end{equation*}%
where $\vec{P}^{x}=(P_{1}^{x},\ldots ,P_{d}^{x})\in \left( \mathcal{P}%
_{m,n}\right) ^{d}$ and $\vec{I}(x)$ is an $\mathcal{R}_{m,n}^{x}$-submodule
of $(\mathcal{P}_{m,n})^{d}$. Define $\hat{f}:E\times \left\{ 0\right\}
\subset \mathbb{R}^{n}\times \mathbb{R}^{d}\rightarrow \mathcal{P}_{m+1,n+d}$
by
\begin{equation}
\hat{f}\left( \left( x,0\right) \right) =\sum_{j=1}^{d}\hat{v}%
_{j}P_{j}^{x}\left( \hat{x}\right) \in \mathcal{R}_{m+1,n+d}^{\left(
x,0\right) }.  \label{ff}
\end{equation}

For each $\left( x,0\right) \in E\times \left\{ 0\right\} $, consider the
set
\begin{equation*}
\hat{I}\left( \left( x,0\right) \right) =\left\{ P\in \mathcal{R}%
_{m+1,n+d}^{\left( x,0\right) }:P\left( \hat{x},0\right) \equiv 0,\left( %
\left[ \partial _{\hat{v}_{1}}P\left( \hat{x},\hat{v}\right) \right] |_{\hat{%
v}=0},\ldots ,\left[ \partial _{\hat{v}_{d}}P\left( \hat{x},\hat{v}\right) %
\right] |_{\hat{v}=0}\right) \in \vec{I}\left( x\right) \right\} .
\end{equation*}

\begin{lem}
\label{lem2} For each $\left(x,0\right)\in E\times\left\{ 0\right\} $, $\hat{%
I}\left(\left(x,0\right)\right)$ is an ideal in $\mathcal{R}%
_{m+1,n+d}^{\left(x,0\right)}$.
\end{lem}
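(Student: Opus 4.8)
The plan is to verify directly that $\hat{I}((x,0))$ is closed under addition and under multiplication by arbitrary elements of $\mathcal{R}_{m+1,n+d}^{(x,0)}$. Additivity is immediate: if $P, P' \in \hat{I}((x,0))$, then $(P+P')(\hat{x},0) \equiv 0$, and since $\vec{I}(x)$ is a submodule (hence closed under addition), the tuple of $\hat{v}_j$-derivatives of $P+P'$ restricted to $\hat{v}=0$ lies in $\vec{I}(x)$. The content is in the multiplicative closure, and here the key computation is exactly the product rule identity already used in the proof of the first lemma of Section \ref{section3}: for $P \in \hat{I}((x,0))$ and any $\hat{Q} \in \mathcal{R}_{m+1,n+d}^{(x,0)}$, one has $[P \odot_{m+1,n+d}^{(x,0)} \hat{Q}]|_{\hat{v}=0} \equiv P|_{\hat{v}=0} \odot_{m+1,n}^{x} \hat{Q}|_{\hat{v}=0} \equiv 0$ since $P|_{\hat{v}=0} \equiv 0$; and
\begin{equation*}
\left.\partial_{\hat{v}_j}\left[P \odot_{m+1,n+d}^{(x,0)} \hat{Q}\right]\right|_{\hat{v}=0} = \left.\partial_{\hat{v}_j}P\right|_{\hat{v}=0} \odot_{m,n}^{x} \pi_m \hat{Q}|_{\hat{v}=0} + \pi_m P|_{\hat{v}=0} \odot_{m,n}^{x} \left.\partial_{\hat{v}_j}\hat{Q}\right|_{\hat{v}=0},
\end{equation*}
and the second term vanishes because $\pi_m P|_{\hat{v}=0} \equiv 0$.

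So the tuple $\left(\left.\partial_{\hat{v}_1}[P \odot \hat{Q}]\right|_{\hat{v}=0}, \ldots, \left.\partial_{\hat{v}_d}[P \odot \hat{Q}]\right|_{\hat{v}=0}\right)$ equals $\left(\left.\partial_{\hat{v}_1}P\right|_{\hat{v}=0} \odot_{m,n}^{x} \pi_m \hat{Q}|_{\hat{v}=0}, \ldots, \left.\partial_{\hat{v}_d}P\right|_{\hat{v}=0} \odot_{m,n}^{x} \pi_m \hat{Q}|_{\hat{v}=0}\right)$, which is precisely $(\pi_m \hat{Q}|_{\hat{v}=0}) \odot_{m,n}^{x}$ applied to the tuple $\left(\left.\partial_{\hat{v}_1}P\right|_{\hat{v}=0}, \ldots, \left.\partial_{\hat{v}_d}P\right|_{\hat{v}=0}\right) \in \vec{I}(x)$. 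Since $\vec{I}(x)$ is an $\mathcal{R}_{m,n}^{x}$-submodule of $(\mathcal{P}_{m,n})^d$, it is closed under multiplication by $\pi_m \hat{Q}|_{\hat{v}=0} \in \mathcal{R}_{m,n}^{x}$, so this tuple lies in $\vec{I}(x)$. Combined with the vanishing of the restriction to $\hat{v}=0$, this shows $P \odot_{m+1,n+d}^{(x,0)} \hat{Q} \in \hat{I}((x,0))$, completing the proof.

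I do not expect a genuine obstacle here: the lemma is the module-theoretic shadow of the Whitney $\omega$-convexity computation already carried out in Section \ref{section3}, with the quantitative estimates stripped away. The only point requiring a little care is bookkeeping with the projection $\pi_m$ and the fact that restriction to $\hat{v}=0$ commutes appropriately with $\odot$ and with $\partial_{\hat{v}_j}$ — but these are the same identities \eqref{whitneyconvex1}–\eqref{0} used earlier, so one can simply cite that computation rather than redo it. The proof should therefore be short, amounting to: (a) additivity is clear; (b) the product formula \eqref{whitneyconvex1}–\eqref{0} shows the restriction vanishes and the $\hat{v}$-derivative tuple is $\pi_m\hat{Q}|_{\hat{v}=0}$ times an element of $\vec{I}(x)$; (c) invoke that $\vec{I}(x)$ is a submodule.
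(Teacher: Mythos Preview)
Your proof is correct and follows essentially the same approach as the paper: both verify multiplicative closure via the product-rule identity $\partial_{\hat v_j}[P\odot\hat Q]|_{\hat v=0}=\partial_{\hat v_j}P|_{\hat v=0}\odot_{m,n}^{x}\pi_m\hat Q|_{\hat v=0}$ (the second term vanishing because $\pi_mP|_{\hat v=0}\equiv 0$), and then invoke that $\vec I(x)$ is an $\mathcal{R}_{m,n}^{x}$-submodule. Your explicit mention of additivity and the cross-reference to the analogous computation in Section~\ref{section3} are minor embellishments, but the argument is otherwise the same.
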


\begin{proof}[Proof of Lemma \protect\ref{lem2}]
Let $P\in \hat{I}\left( \left( x,0\right) \right) $ and $Q\in \mathcal{R}%
_{m+1,n+d}^{\left( x,0\right) }$. We must show that
\begin{equation}
\left[ P\odot _{m+1,n+d}^{\left( x,0\right) }Q\right] \in \hat{I}\left(
\left( x,0\right) \right) ,  \label{10}
\end{equation}%
where $\odot _{m+1,n+d}^{\left( x,0\right) }$ denotes the multiplication in $%
\mathcal{R}_{m+1,n+d}^{\left( x,0\right) }$.

Since $P\left( \hat{x},0\right) \equiv 0$, we have
\begin{equation}
\left. \left[ P\odot _{m+1,n+d}^{\left( x,0\right) }Q\right] \right\vert _{%
\hat{v}=0}\equiv P|_{\hat{v}=0}\odot _{m+1,n}^{x }Q|_{\hat{v}=0}\equiv 0.
\label{8}
\end{equation}

Now, let $\pi _{m}:\mathcal{R}_{m+1,n+d}^{\left( x,0\right) }\rightarrow
\mathcal{R}_{m,n+d}^{\left( x,0\right) }$ be the natural projection. We have
\begin{align}
& \left( \left. \partial _{\hat{v}_{1}}\left( P\odot _{m+1,n+d}^{\left(
x,0\right) }Q\right) \right\vert _{\hat{v}=0},\ldots ,\left. \partial _{\hat{%
v}_{d}}\left( P\odot _{m+1,n+d}^{\left( x,0\right) }Q\right) \right\vert _{%
\hat{v}=0}\right)  \notag \\
=& (\left. \partial _{\hat{v}_{1}}P\right\vert _{\hat{v}=0}\odot
_{m,n}^{x}\left. \pi _{m}Q\right\vert _{\hat{v}=0}+\left. \pi
_{m}P\right\vert _{\hat{v}=0}\odot _{m,n}^{x}\left. \partial _{\hat{v}%
_{1}}Q\right\vert _{\hat{v}=0},\ldots ,  \notag \\
& \qquad \left. \partial _{\hat{v}_{d}}P\right\vert _{\hat{v}=0}\odot
_{m,n}^{x}\left. \pi _{m}Q\right\vert _{\hat{v}=0}+\left. \pi
_{m}P\right\vert _{\hat{v}=0}\odot _{m,n}^{x}\left. \partial _{\hat{v}%
_{d}}Q\right\vert _{\hat{v}=0})  \notag \\
=& \left( \partial _{\hat{v}_{1}}P|_{\hat{v}=0}\odot _{m,n}^{x}\pi _{m}Q|_{%
\hat{v}=0},\ldots ,\partial _{\hat{v}_{d}}P|_{\hat{v}=0}\odot _{m,n}^{x}\pi
_{m}Q|_{\hat{v}=0}\right) \in \vec{I}\left( x\right) ,  \label{9}
\end{align}%
where we used the facts that $P\in \hat{I}\left( \left( x,0\right) \right) $
(so that $\pi_m P|_{\hat{v}=0}\equiv 0$) and that $\vec{I}\left( x\right) $
is an $\mathcal{R}_{m,n}^{x}$-submodule of $\left( \mathcal{P}%
_{m,n}^{x}\right) ^{d}$.

From (\ref{8}) and (\ref{9}), we conclude the proof of (\ref{10}).
\end{proof}

\begin{remark}
\label{rmk2} 
We recall that $\left\{ \vec{H}\left( x\right) \right\} _{x\in E}$
is assumed to be its own Glaeser refinement. We will show that the following bundle 

\begin{equation}
\left\{ \widehat{H}\left( \left( x,0\right) \right) :=\hat{f}\left( \left(
x,0\right) \right) +\hat{I}\left( \left( x,0\right) \right) \right\}
_{\left( x,0\right) \in E\times \left\{ 0\right\} }\text{ is also its own Glaeser
refinement}.  \label{4}
\end{equation}%

With $n+d$, $m+1$, $E\times \left\{ 0\right\} $, $\hat{f}$,
$\hat{I}\left( \left( x,0\right) \right) $,  and $\left\{ \widehat{H}\left( \left( x,0\right) \right) \right\}_{\left( x,0\right) \in E\times \left\{ 0\right\}}$ in place of $n$, $m$, $\bar{E}$, $%
\bar{f}$, $\bar{I}\left( x\right) $, and $\left\{ \bar{H}(x) \right\}_{x\in \bar{E}}$in Theorem \ref{feff-thm1}, we see that there exists $G\in C^{m+1}\left(
\mathbb{R}^{n+d}\right) $ with $J_{\left( x,0\right) }^{m+1}\left( G\right)
\in \widehat{H}\left( \left( x,0\right) \right) $ for all $\left( x,0\right)
\in E\times \left\{ 0\right\} $.

Setting
\begin{equation*}
\vec{F}\left( \hat{x}\right) =\left( \left. [\partial _{\hat{v}_{1}}G\left(
\hat{x},\hat{v}\right) ]\right\vert _{\hat{v}=0},\ldots ,\left. [\partial _{%
\hat{v}_{d}}G\left( \hat{x},\hat{v}\right) ]\right\vert _{\hat{v}=0}\right) ,
\end{equation*}%
in view of the definition of $\widehat{H}\left( \left( x,0\right) \right) $
, we have $J_{x}^{m}\vec{F}\in \vec{H}(x)$ for all $x\in E$, thus proving
Theorem \ref{Thm3}.
\end{remark}

Therefore, the crux of the matter is to verify (\ref{4}). To this end, let $%
x_{0}\in E$ and $\hat{P}_{0}\in \widehat{H}\left( \left( x_{0},0\right)
\right) $. We will prove that $\hat{P}_{0}\in \widehat{H}^{\prime }\left(
\left( x_{0},0\right) \right) $.

Since $\hat{P}_{0}\in \widehat{H}\left( \left( x_{0},0\right) \right) $, we
can write
\begin{equation}
\hat{P}_{0}\left( \hat{x},\hat{v}\right) =\hat{f}\left( \left(
x_{0},0\right) \right) (\hat{x},\hat{v})+\sum_{1\leq \left\vert \xi
\right\vert \leq m+1}\frac{1}{\xi !}\hat{v}^{\xi }P_{\xi }\left( \hat{x}%
\right) \text{.}  \label{ideal1}
\end{equation}%
where $P_{\xi }\in \mathcal{P}_{m,n}$ with
\begin{equation}
\vec{P}\equiv \left(\left. \left( \partial _{\hat{v}_{1}}\left[ \sum_{1\leq
\left\vert \xi \right\vert \leq m+1}\frac{1}{\xi !}\hat{v}^{\xi }P_{\xi
}\left( \hat{x}\right) \right] \right) \right\vert _{\hat{v}=0},\ldots
,\left. \left( \partial _{\hat{v}_{d}}\left[ \sum_{1\leq \left\vert \xi
\right\vert \leq m+1}\frac{1}{\xi !}\hat{v}^{\xi }P_{\xi }\left( \hat{x}%
\right) \right] \right) \right\vert _{\hat{v}=0}\right)\in \vec{I}\left( x\right)
\text{.}  \label{ideal4}
\end{equation}

We write
\[
\vec{P}_{0}\equiv \left( P_{0,1},\ldots ,P_{0,d}\right) \equiv \vec{P}%
^{x_{0}}+\vec{P}.
\]

Thanks to (\ref{ideal4}), we have
\begin{equation}
\vec{P}_{0} \in \vec{H}(x_0). \label{ideal6}
\end{equation}%

From (\ref{ideal1}), we have
\begin{equation}
\hat{P}_{0}\left( \hat{x},\hat{v}\right) =\sum_{1\leq k\leq d}\hat{v}%
_{k}P_{0,k}\left( \hat{x}\right) +\sum_{2\leq \left\vert \xi \right\vert
\leq m+1}\frac{1}{\xi !}\hat{v}^{\xi }P_{\xi }\left( \hat{x}\right) \text{.}
\label{ideal7}
\end{equation}


Fix $\varepsilon>0$.

Since $\left\{ \vec{H}\left( x\right) =\vec{P}^{x}+\vec{I}\left( x\right)
\right\} _{x\in E}$ is its own Glaeser refinement, we know that there exists
$\delta >0$ such that for all $x_{1},\ldots ,x_{k^{\#}}\in E\cap B\left(
x_{0},\delta \right) $, there exist
\begin{equation}
\vec{P}_{1}=\left( P_{1,1},\ldots ,P_{1,d}\right) \in \vec{H}\left(
x_{1}\right) ,\ldots ,\vec{P}_{k^{\#}}=\left( P_{k^{\#},1},\ldots
,P_{k^{\#},d}\right) \in \vec{H}\left( x_{k^{\#}}\right)  \label{1}
\end{equation}%
with
\begin{equation}
\left\vert \partial ^{\alpha }\left( P_{i,k}-P_{j,k}\right) \left(
x_{i}\right) \right\vert \leq \varepsilon \left\vert x_{i}-x_{j}\right\vert
^{m-\left\vert \alpha \right\vert }  \label{2}
\end{equation}%
for $\left\vert \alpha \right\vert \leq m,0\leq i,j\leq k^{\#},1\leq k\leq d$. 

Now, for any $\left(x_{1},0\right),\ldots,\left(x_{k^{\#}},0\right)\in%
\left(E\times\left\{ 0\right\} \right)\cap
B\left(\left(x_0,0\right),\delta\right)$, we claim the following:

\begin{itemize}
\item
\begin{eqnarray*}
\hat{P}_{1}\left( \hat{x},\hat{v}\right) &\equiv &\sum_{j=1}^{d}\hat{v}%
_{j}P_{1,j}\left( \hat{x}\right) +\sum_{2\leq \left\vert \xi \right\vert
\leq m+1}\frac{1}{\xi !}\hat{v}^{\xi }P_{\xi }\left( \hat{x}\right) \in
\widehat{H}\left( \left( x_{1},0\right) \right) , \\
&&\ldots, \\
\hat{P}_{k^{\#}}\left( \hat{x},\hat{v}\right) &\equiv &\sum_{j=1}^{d}\hat{v}%
_{j}P_{k^{\#},j}\left( \hat{x}\right) +\sum_{2\leq \left\vert \xi
\right\vert \leq m+1}\frac{1}{\xi !}\hat{v}^{\xi }P_{\xi }\left( \hat{x}%
\right) \in \widehat{H}\left( \left( x_{k^{\#}},0\right) \right) ,
\end{eqnarray*}%
where $\vec{P}_{1},\ldots ,\vec{P}_{k^{\#}}$ are as chosen in (\ref{1}); $%
P_{\xi }$ are as in (\ref{ideal1}).

\item For all $\left\vert \gamma \right\vert \leq m+1$, we have
\begin{equation}
\left\vert \partial ^{\gamma }\left( \hat{P}_{i}-\hat{P}_{j}\right) \left(
\left( x_{i},0\right) \right) \right\vert \leq \varepsilon \left\vert
x_{i}-x_{j}\right\vert ^{m+1-\left\vert \gamma \right\vert },\mbox{ for }%
0\leq i,j\leq k^{\#}.  \label{7}
\end{equation}
\end{itemize}

To see the first bullet point, fix an integer $i\in \left\{ 1,\ldots
,k^{\#}\right\} $ and consider
\begin{align}
& \hat{P}_{i}\left( \hat{x},\hat{v}\right) -\hat{f}\left( \left(
x_{i},0\right) \right) \left( \hat{x},\hat{v}\right)  \notag \\
=& \left( \sum_{j=1}^{d}\hat{v}_{j}P_{i,j}\left( \hat{x}\right) +\sum_{2\leq
\left\vert \xi \right\vert \leq m+1}\frac{1}{\xi !}\hat{v}^{\xi }P_{\xi
}\left( \hat{x}\right) \right) -\sum_{j=1}^{d}\hat{v}_{j}P_{j}^{x_{i}}\left(
\hat{x}\right)  \notag \\
=& \sum_{j=1}^{d}\hat{v}_{j}\left( P_{i,j}-P_{j}^{x_{i}}\right) \left( \hat{x%
}\right) +\sum_{2\leq \left\vert \xi \right\vert \leq m+1}\frac{1}{\xi !}%
\hat{v}^{\xi }P_{\xi }\left( \hat{x}\right) .  \label{5}
\end{align}%
Notice that
\begin{equation}
\left. \left[ \partial _{\hat{v}}^{\beta }\left( \sum_{2\leq \left\vert \xi
\right\vert \leq m+1}\frac{1}{\xi !}\hat{v}^{\xi }P_{\xi }\left( \hat{x}%
\right) \right) \right] \right\vert _{\hat{v}=0}\equiv 0\text{ for }%
\left\vert \beta \right\vert \leq 1\text{.}  \label{ideal2}
\end{equation}%
From (\ref{5}) and (\ref{ideal2}), it follows that
\begin{equation}
\left[ \hat{P}_{i}\left( \hat{x},0\right) -\hat{f}\left( \left(
x_{i},0\right) \right) \right] \left( \hat{x},0\right) \equiv 0.  \label{6}
\end{equation}

Next, we have
\begin{align*}
& \left( \left. \left[ \partial _{\hat{v}_{1}}\left( \hat{P}_{i}-\hat{f}%
\left( \left( x_{i},0\right) \right) \right) \left( \hat{x},\hat{v}\right) %
\right] \right\vert _{\hat{v}=0},\ldots ,\left. \left[ \partial _{\hat{v}%
_{d}}\left( \hat{P}_{i}-\hat{f}\left( \left( x_{i},0\right) \right) \right)
\left( \hat{x},\hat{v}\right) \right] \right\vert _{\hat{v}=0}\right) \\
=& \left( \left( P_{i,1}-P_{1}^{x_{i}}\right) \left( \hat{x}\right) ,\ldots
,\left( P_{i,d}-P_{d}^{x_{i}}\right) \left( \hat{x}\right) \right) \in \vec{I%
}\left( x_{i}\right) ,
\end{align*}%
where the first equality follows from (\ref{5}) and (\ref{ideal2}); and the
last relation follows from (\ref{1}). Together with (\ref{6}), this
completes the proof of the first bullet point.

For the second bullet point, we write $\partial ^{\gamma }=\partial _{\hat{x}%
}^{\alpha }\partial _{\hat{v}}^{\beta }$ and observe that for $0\leq i\leq
k^{\#}$,%
\begin{equation}
\left. \left[ \partial _{\hat{v}}^{\beta }\hat{P}_{i}\right] \right\vert _{%
\hat{v}=0}\equiv \left\{
\begin{array}{c}
0 \\
P_{i,j} \\
P_{\beta }%
\end{array}%
\right.
\begin{array}{l}
\text{if }\left\vert \beta \right\vert =0 \\
\text{for some }1\leq j\leq d\text{, if }\left\vert \beta \right\vert =1 \\
\left\vert \beta \right\vert \geq 2%
\end{array}.  \label{observation1}
\end{equation}%
In view of (\ref{observation1}), we see that (\ref{7}) holds trivially for $%
\left\vert \beta \right\vert \not=1$. Therefore, it suffices to show (\ref{7}%
) for $\left\vert \beta \right\vert =1$. Without loss of generality, we may
assume $\partial _{\hat{v}}^{\beta }=\partial _{\hat{v}_{k}}$ for some $k\in
\left\{ 1,\ldots ,d\right\} $. We have
\begin{align*}
& \left\vert \partial ^{\gamma }\left( \hat{P}_{i}-\hat{P}_{j}\right) \left(
\left( x_{i},0\right) \right) \right\vert  \\
=& \left\vert \partial _{\hat{x}}^{\alpha }\partial _{\hat{v}_{k}}\left(
\hat{P}_{i}-\hat{P}_{j}\right) \left( \left( x_{i},0\right) \right)
\right\vert  \\
=& \left\vert \partial _{\hat{x}}^{\alpha }\left( P_{i,k}-P_{j,k}\right)
\left( x_{j}\right) \right\vert  \\
\leq & \varepsilon \left\vert x_{i}-x_{j}\right\vert ^{m-\left\vert \alpha
\right\vert },\mbox{
thanks to \ensuremath{\left(\ref{2}\right)}}, \\
=& \varepsilon \left\vert x_{i}-x_{j}\right\vert ^{m+1-\left\vert \gamma
\right\vert },
\end{align*}%
establishing (\ref{7}).

The two bullet points show that $\widehat{P}_0 \in \widehat{H}'\left( \left( x_{0},0\right)
\right) $. Since $x_{0}\in E$ and $\widehat{P}_0 \in \widehat{H}\left( \left( x_{0},0\right)
\right)$ are arbitrary,
this completes the proof of (\ref{4}). Theorem \ref{Thm3} now follows from
Remark \ref{rmk2}.
\end{proof}

\section{Solutions to Problem \protect\ref{problem3-1} for $X=C^m(\mathbb{R}%
^n)$ and $X=C^{m,\protect\omega}(\mathbb{R}^n)$}

\label{solution-to-kollar}

Armed with Theorems \ref{Thm1} and \ref{Thm3}, we are now in a position to
answer Problem \ref{problem3-1} for $X=C^m(\mathbb{R}^n)$ and $%
X=C^{m,\omega}(\mathbb{R}^n)$.

We write $Q^{o}=[-1/2,1/2]^{n}$ to denote the unit cube in $\mathbb{R}^{n}$.

For $X=C^{m,\omega}(Q^{o})$, to apply Theorem \ref{Thm1}, we will take $E=Q^{o}$
and
\begin{equation*}
\sigma(x)=\left\{ P\in(\mathcal{P}_{m,n})^{d}:%
\sum_{i=1}^{d}P_{i}(x)f_{i}(x)=0\right\} .
\end{equation*}

It is easy to see that $\sigma(x)$ is Whitney $\omega$-convex with Whitney
constant $1$. By Theorem \ref{Thm1} and the standard Whitney extension Theorem for $C^{m, \omega}(\mathbb{R}^n)$ (see Theorem \ref{Thm2}), we easily see that the solvability of
Problem \ref{problem3-1} for $X=C^{m,\omega}(Q^{o})$ is equivalent to the
solvability of the following elementary linear algebra problem:

\begin{enumerate}
\item \emph{Does there exist $M<\infty$ such that the following holds:
Given any $k^{\#}$ distinct points $x_1,\ldots,x_{k^{\#}}\in Q^o$, there
exist $P_1^j,\ldots,P^j_{k^{\#}}\in \mathcal{P}_{m,n}$ for $1\leq j\leq d$
such that
\begin{equation*}
\left\{
\begin{array}{l}
\sum_{j=1}^{d}f_{j}\left( x_{k}\right) P_{k}^{j}\left( x_{k}\right) =\phi
\left( x_{k}\right) \text{ for }k=1,\ldots ,k^{\#} \\
\sum_{k=1}^{k^{\#}}\sum_{j=1}^{d}\sum_{\left\vert \alpha \right\vert \leq
m}\left\vert \partial ^{\alpha }P_{k}^{j}\left( x_{k}\right) \right\vert
^{2}+\sum_{1\leq k<k^{\prime }\leq k^{\#}}\sum_{j=1}^{d}\sum_{\left\vert
\alpha \right\vert \leq m}\left\vert \frac{\partial ^{\alpha }\left(
P_{k}^{j}-P_{k^{\prime }}^{j}\right) \left( x_{k}\right) }{\omega \left(
\left\vert x_{k}-x_{k^{\prime }}\right\vert \right) \left\vert
x_{k}-x_{k^{\prime }}\right\vert ^{m-\left\vert \alpha \right\vert }}%
\right\vert^2 \leq M^{2}%
\end{array}%
\right. \text{.}
\end{equation*}
}
\end{enumerate}

Next, we describe the solution to Problem \ref{problem3-1} for $X=C^m(Q^o)$.

It is easy to see that the m-jet of any $C^{m}\left( Q^{o}\right) $ solution of Problem 1 at $x\in
Q^{o}$ belongs to
\begin{equation*}
\vec{H}\left( x\right) =\left\{ \vec{P}=\left( P_{1},\ldots ,P_{d}\right)
\in \left( \mathcal{P}_{m,n}\right) ^{d}:\sum_{i=1}^{d}P_{i}\left( x\right)
f_{i}\left( x\right) =\phi \left( x\right) \right\} \text{.}
\end{equation*}%
We consider the bundle $\mathcal{\vec{H}}=\left( \vec{H}\left( x\right)
\right)_{x\in Q^o}$. Solving Problem \ref{problem3-1} amounts to deciding whether this bundle admits a
section.

By Taylor's theorem, we know that the sections of a bundle $\mathcal{\vec{H}}$ coincide with the sections of its Glaeser refinement.

Now we describe an effective method for computing the Glaeser refinements.

Given points $x_{0},x_{1},\ldots ,x_{k}\in \mathbb{R}^{n}$, and given
polynomials $\vec{P}_{0}=\left( \vec{P}_{0,1},\ldots ,\vec{P}_{0,d}\right) ,%
\vec{P}_{1}=\left( \vec{P}_{1,1},\ldots ,\vec{P}_{1,d}\right) ,\ldots ,\vec{P%
}_{k}=\left( \vec{P}_{k,1},\ldots ,\vec{P}_{k,d}\right) \in \left( \mathcal{P%
}_{m,n}\right) ^{d}$, we define
\begin{eqnarray*}
&&\mathcal{Q}\left( \vec{P}_{0},\vec{P}_{1},\ldots ,\vec{P}%
_{k};x_{0},x_{1},\ldots ,x_{k}\right) \\
&\equiv &\sum_{\substack{ i^{\prime },i=0  \\ x_{i}\not=x_{i^{\prime }}}}%
^{k}\sum_{j=1}^{d}\sum_{\left\vert \alpha \right\vert \leq m}\left\vert
\frac{\partial ^{\alpha }\left( P_{i,j}-P_{i^{\prime },j}\right) \left(
x_{i}\right) }{\left\vert x_{i}-x_{i^{\prime }}\right\vert ^{m-\left\vert
\alpha \right\vert }}\right\vert ^{2}\text{.}
\end{eqnarray*}

Fix $x_0 \in Q^o$ and $\vec{P}_0 \in \vec{H}(x_0)$. For fixed $x_1,\ldots,
x_k^{\#}\in Q^o$, we compute the minimum for the following quadratic form
over a finite-dimensional affine space:
\begin{eqnarray*}
&&MIN(x_0,\vec{P}_0;x_1,\ldots,x_k^{\#}) \\
&\equiv& \min \{ \mathcal{Q}(\vec{P}_0,\vec{P}_1,\ldots,\vec{P}%
_{k^{\#}};x_0,x_1,\ldots,x_{k^{\#}}): \vec{P}_1\in \vec{H}(x_1),\ldots,\vec{P%
}_{k^{\#}} \in \vec{H}(x_{k^{\#}})\}.
\end{eqnarray*}
Determining $MIN(x_0,\vec{P}_0;x_1,\ldots,x_k^{\#})$ is just a routine linear algebra problem. From the definition of Glaeser refinement, it is
easy to see that given $x_0 \in Q^o$ and $\vec{P}_0 \in \vec{H}(x_0)$, we have $\vec{P}_0 \in \vec{H}%
^{\prime }(x_0)$ if and only if $\limsup_{\delta \downarrow 0,
x_1,\ldots,x_k^{\#} \in B(x_0,\delta)} MIN(x_0,\vec{P}_0;x_1,\ldots,x_k^{%
\#})=0$. To wit, computing the Glaeser refinement of a bundle $\vec{\mathcal{%
H}}$ involves doing elementary linear algebra and calculating the $\limsup$.

The following lemma states that the Glaeser refinements stablize after a
finite number of iterations. 


\begin{lem}[Stabilization Lemma adapted from \protect\cite{F1} (which in
turn is adapted from \cite{Glaeser,Bierstone-Milman})]

{\label{lem1}} Let $E\subset \mathbb{R}^n$ be compact. Suppose we are given a bundle $\vec{\mathcal{H}}_0=\{\vec{H}(x)\}_{x\in E}$. For $l \geq 0$, let $\vec{\mathcal{H}}_{l+1}=\{\vec{H}_{l+1}(x) \}_{x \in E}$ be the Glaeser refinement of $\vec{\mathcal{H}}_l=\{\vec{H}_l(x)\}_{x \in E}$. For each $x \in E$, if $\dim \vec{H}_{2k+1}\left(x\right)\geq\dim%
\left[\left(\mathcal{P}_{m,n}\right)^{d}\right]-k$, then $\vec{H}%
_{l}\left(x\right)=\vec{H}_{2k+1}\left(x\right)$ for all $l\geq2k+1$.
\end{lem}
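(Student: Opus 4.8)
The plan is to exploit the fact that the Glaeser refinement is monotone decreasing in the sense of subbundles, so that for each fixed $x \in E$ the dimensions $d_l(x) := \dim \vec{H}_l(x)$ form a non-increasing sequence of non-negative integers (here I adopt the convention $\dim \emptyset = -1$, say, or I simply treat the empty-fiber case separately — empty fibers stay empty under refinement, so the conclusion is trivial there). Non-increasing integer sequences bounded below are eventually constant, so the real content of the lemma is the \emph{quantitative} bound: that stabilization at $x$ happens by step $2k+1$ once we know $d_{2k+1}(x) \ge \dim[(\mathcal{P}_{m,n})^d] - k =: N - k$. First I would record the elementary but crucial \textbf{key fact}: the Glaeser refinement either strictly decreases a fiber's dimension or leaves the \emph{whole bundle} unchanged at that fiber in a way that propagates. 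The precise statement I want is: if $\vec{H}_{l+1}(x_0) = \vec{H}_l(x_0)$ \emph{and} $\vec{H}_{l+1}(x) = \vec{H}_l(x)$ for all $x$ near $x_0$, then $\vec{H}_{l+2}(x_0) = \vec{H}_{l+1}(x_0)$; iterating, once the bundle stops changing in a neighborhood it never changes again there. But this local-neighborhood bookkeeping is awkward to run pointwise, so instead I would argue by a dimension-drop counting argument that is purely pointwise at $x_0$.

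Here is the counting argument I would carry out. Fix $x_0$. For each $l$, either $d_{l+1}(x_0) < d_l(x_0)$ (a \emph{strict drop} at $x_0$), or $d_{l+1}(x_0) = d_l(x_0)$. I claim: if no strict drop at $x_0$ occurs at step $l \to l+1$, \emph{and} in fact $\vec{H}_{l+1}(x_0) = \vec{H}_l(x_0)$, then combined with monotonicity one can show $\vec{H}_{l+2}(x_0)=\vec{H}_{l+1}(x_0)$ as well — and here is where the subtlety enters, because a priori equal \emph{dimension} does not force equal \emph{fiber}. The resolution, following \cite{F1}, is to pair the dimension count at $x_0$ with dimension counts at the auxiliary points $x_1,\dots,x_{k^\#}$ that appear in the definition of the refinement, and to observe that the refinement at $x_0$ only depends on fibers $\vec{H}_l(x)$ for $x$ in arbitrarily small neighborhoods of $x_0$; so I would instead track, for each $l$, whether \emph{somewhere in every neighborhood of $x_0$} a strict dimension drop occurs between level $l$ and level $l+1$. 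Each such "drop event near $x_0$" can be charged against a decrease of $\limsup_{x \to x_0} d_l(x)$, which is again a non-increasing integer sequence; and if it fails to drop at two consecutive even/odd steps the bundle is locally stable and hence pointwise stable at $x_0$ from then on. Since $\limsup_{x\to x_0} d_l(x) \le N$ always and (by hypothesis at level $2k+1$) $d_{2k+1}(x_0) \ge N-k$, at most $k$ genuine drop events can have happened among the $2k+1$ refinement steps, forcing two consecutive steps with no drop within the first $2k+1$, hence stabilization by step $2k+1$. The bound is $2k+1$ rather than $k+1$ precisely because a single "no-drop" step is not enough to conclude local stability; one needs two in a row (this is the standard Glaeser/Bierstone–Milman–Pawłucki phenomenon).

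Concretely the steps, in order: (1) reduce to non-empty fibers; (2) state and prove the monotonicity $\vec{H}_{l+1}(x) \subseteq \vec{H}_l(x)$ (this is property (P1), already available) and deduce $d_l(x)$ and $\limsup_{x\to x_0} d_l(x)$ are non-increasing in $l$; (3) prove the local-stability-propagates lemma: if $\vec{H}_{l+1}(x) = \vec{H}_l(x)$ for all $x$ in a neighborhood $U$ of $x_0$, then $\vec{H}_{l+2}(x)=\vec{H}_{l+1}(x)$ for all $x$ in a (possibly smaller) neighborhood, hence by induction $\vec{H}_{l'}(x_0) = \vec{H}_l(x_0)$ for all $l' \ge l$ — this follows directly from the definition of Glaeser refinement, since the refinement at a point reads off only nearby fibers at the previous level; (4) the counting argument: define $e_l := \limsup_{x \to x_0} d_l(x) \in \{-1,0,1,\dots,N\}$, observe $e_l \ge d_l(x_0) \ge N - k$ for $l = 2k+1$ by hypothesis and $e_l$ non-increasing, so among $l = 0, 1, \dots, 2k+1$ the value $e_l$ strictly decreases at most $k$ times (since it can decrease at most $e_0 - e_{2k+1} \le N - (N-k) = k$... but actually I need to be careful that the hypothesis only controls level $2k+1$, so the clean bound is: $e_l$ decreases at most $k$ times over the steps $0\to1, \dots, 2k\to 2k+1$), whence there are two consecutive steps $l \to l+1 \to l+2$ with $l+2 \le 2k+1$ where $e$ does not decrease; (5) show that "no decrease of $e$ at a step" together with monotonicity of each $\vec{H}_l(x)$ forces $\vec{H}_{l+1}(x) = \vec{H}_l(x)$ for $x$ in some neighborhood of $x_0$ (this is the one place needing a small argument: equality of $\limsup$ of dimensions plus $\subseteq$ forces fiberwise equality off a set not accumulating at $x_0$ — and one absorbs the exceptional set into "the neighborhood shrinks"); (6) feed two consecutive such steps into step (3) to conclude $\vec{H}_{l'}(x_0) = \vec{H}_{2k+1}(x_0)$ for all $l' \ge 2k+1$, which is the assertion. \textbf{The main obstacle} is step (5): the passage from "the limsup of fiber dimensions doesn't drop" to "fibers actually coincide near $x_0$," because equality of dimensions is genuinely weaker than equality of affine subspaces, and one must use the nesting $\vec{H}_{l+1}(x) \subseteq \vec{H}_l(x)$ together with an argument that the set of $x$ where the inclusion is strict cannot accumulate at $x_0$ without forcing $e_{l+1} < e_l$ — this is exactly the lower-semicontinuity-of-codimension bookkeeping that makes the "$2k+1$" sharp and is the heart of the Glaeser iteration, faithfully following \cite{F1,Bierstone-Milman,Glaeser}.
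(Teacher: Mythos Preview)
Your overall strategy---track fiber dimensions near $x_0$, use that local stability propagates under Glaeser refinement, and count dimension drops---shares ingredients with the paper's proof (your step~(3) and the semicontinuity inequality $\dim \vec H_{l+1}(x_0)\le \liminf_{y\to x_0}\dim\vec H_l(y)$ both appear there). But there is a genuine gap at your step~(5), which you correctly flag as the crux yet do not actually resolve. The assertion that ``$e_l=e_{l+1}$ forces $\vec H_{l+1}(y)=\vec H_l(y)$ for $y$ near $x_0$, off a set not accumulating at $x_0$'' is simply false. For a schematic counterexample, take $d_l(y)\equiv 5$ and let $d_{l+1}(y)$ equal $5$ along one sequence tending to $x_0$ and $4$ along another; then $\limsup d_l=\limsup d_{l+1}=5$, yet strict inclusion $\vec H_{l+1}(y)\subsetneq\vec H_l(y)$ occurs along the second sequence, which accumulates at $x_0$. (An analogous example defeats the $\liminf$ variant.) Nothing in ``equal $\limsup$ plus nesting'' rules this out. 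There are smaller problems too: the inequality $e_l\ge d_l(x_0)$ is unjustified for $\limsup_{y\to x_0,\,y\neq x_0}$, and your pigeonhole in step~(4) does not in fact yield two \emph{consecutive} non-drops---the alternating pattern of $k+1$ non-drops and $k$ drops among $2k+1$ steps has no two adjacent non-drops.

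The paper sidesteps all of this by arguing by induction on $k$ and, crucially, applying the inductive hypothesis at the \emph{nearby points} $y$, not only at $x_0$. After reducing to the case $\dim\vec H_{2k+1}(x_0)=\dim\vec H_{2k+2}(x_0)=\dim\vec H_{2k+3}(x_0)=N-(k+1)$, one shows by contradiction that $\vec H_{2k+2}(y)=\vec H_{2k+1}(y)$ for all $y$ near $x_0$: if some $y$ arbitrarily close to $x_0$ had $\vec H_{2k+2}(y)\subsetneq\vec H_{2k+1}(y)$, then the inductive hypothesis \emph{applied at $y$} forces $\dim\vec H_{2k+1}(y)\le N-(k+1)$, hence $\dim\vec H_{2k+2}(y)\le N-(k+2)$; feeding this into the semicontinuity inequality gives $\dim\vec H_{2k+3}(x_0)\le N-(k+2)$, a contradiction. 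This use of the inductive hypothesis at the auxiliary points---producing, for each nearby $y$, either full stabilization at level $2k+1$ or a sharp upper bound on $\dim\vec H_{2k+1}(y)$---is exactly what your step~(5) needs and cannot be recovered from tracking a single scalar sequence $e_l$ at $x_0$ alone.
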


\begin{proof}[Proof of Lemma \protect\ref{lem1}]
Fix $x \in E$. We proceed by induction on $k$. For $k=0$, the lemma asserts that 
\begin{equation}
\mbox{if
\ensuremath{\vec{H}_{1}\left(x\right)=\left(\mathcal{P}_{m,n}\right)^{d}}, then
\ensuremath{\vec{H}_{l}\left(x\right)=\vec{H}_{1}\left(x\right)} for all
\ensuremath{l\geq1}}.  \label{11}
\end{equation}
From the definition of $\vec{\mathcal{H}}_l$, one sees that
\begin{equation}
\dim \vec{H}_{l+1}\left(x\right)\leq\lim_{y\rightarrow x}\inf\dim \vec{H}%
_{l}\left(y\right).  \label{12}
\end{equation}
Hence, if $\vec{H}_{1}(x)=\left(\mathcal{P}_{m,n}\right)^{d}$, then $\vec{H}_{0}(y)=\left(\mathcal{P}_{m,n}\right)^{d}$ for all $y$ in a neighborhood of $x$. Consequently, $\vec{H}_l(y)=\left(\mathcal{P}_{m,n}\right)^{d}$, for all $l \geq 1$, proving \eqref{11}.

For the induction step, fix some $k\geq0$ and assume Lemma \ref{lem1} holds for that $k$.  We must show that
\begin{equation}
\mbox{if }\dim \vec{H}_{2k+3}\left(x\right)\geq\dim\left[\left(\mathcal{P}%
_{m,n}\right)^{d}\right]-\left(k+1\right),\mbox{ then }\vec{H}%
_{l}\left(x\right)=\vec{H}_{2k+3}\left(x\right)\mbox{ for all }l\geq2k+3.
\label{13}
\end{equation}
By the inductive hypothesis, we know that if $\dim \vec{H}%
_{2k+1}\left(x\right)\geq\dim[\left(\mathcal{P}_{m,n}\right)^{d}]-k$, then $%
\vec{H}_{l}\left(x\right)=\vec{H}_{2k+1}\left(x\right)$ for all $l\geq2k+1$;
consequently, (\ref{13}) holds. Thus, to prove (\ref{13}), we may assume
that
\begin{equation*}
\dim \vec{H}_{2k+1}\left(x\right)\leq\dim\left[\left(\mathcal{P}%
_{m,n}\right)^{d}\right]-\left(k+1\right).
\end{equation*}
Thus,\begin{equation}
\dim \vec{H}_{2k+1}\left(x\right)=\dim \vec{H}_{2k+2}\left(x\right)=\dim
\vec{H}_{2k+3}\left(x\right)=\dim\left[\left(\mathcal{P}_{m,n}\right)^{d}%
\right]-\left(k+1\right).  \label{14}
\end{equation}

Note that $\dim \vec{H}_{2k+1}(y)\geq \dim (\mathcal{P}_{m,n})^d-(k+1)$ for all $y$ sufficiently close to $x$ since otherwise \eqref{12} (with $l=2k+1$) would contradict \eqref{14}.

Next, we will show the following:
\begin{equation}
\vec{H}_{2k+2}\left(y\right)=\vec{H}_{2k+1}\left(y\right)\mbox{ for all }%
y \mbox{
sufficiently close to }x.  \label{15}
\end{equation}

Suppose toward a contradiction that (\ref{15}) fails; that is,
\begin{equation}
\mbox{ there exists }y \mbox{ arbitrarily near }x \mbox{ such that } \dim \vec{H}_{2k+2}\left(y\right)\leq\dim \vec{H}_{2k+1}\left(y\right)-1.  \label{16}
\end{equation}
Then, since we are assuming Lemma \ref{lem1} for $k$, we must have
\begin{equation}
\dim \vec{H}_{2k+1}\left(y\right)\leq\dim\left[\left(\mathcal{P}%
_{m,n}\right)^{d}\right]-\left(k+1\right)  \label{17}
\end{equation}
for all $y$ as in \eqref{16}.

This together with (\ref{16}) shows
\begin{equation}
\dim \vec{H}_{2k+2}\left(y\right)\leq\dim\left[\left(\mathcal{P}%
_{m,n}\right)^{d}\right]-k-2\mbox{ for all }y%
\mbox{ arbitrarily close
to }x.  \label{18}
\end{equation}
From (\ref{12}) and \eqref{18}, we get
\begin{equation*}
\dim \vec{H}_{2k+3}\left(x\right)\leq\dim\left[\left(\mathcal{P}%
_{m,n}\right)^{d}\right]-k-2,
\end{equation*}
contradicting (\ref{14}). Therefore, (\ref{15}) cannot fail.

By (\ref{15}), we see easily that $%
\vec{H}_{l}\left(y\right)=\vec{H}_{2k+1}\left(y\right)$ for all $l\geq2k+1$
and all $y\in E$ sufficiently close to $x$. In particular, we have $\vec{H}%
_{l}\left(x\right)=\vec{H}_{2k+3}\left(x\right)$ for all $l\geq2k+3$. This
completes the induction step, and proves Lemma \ref{lem1}.
\end{proof}

According to this lemma, we know that $\vec{\mathcal{H}}_l = \vec{\mathcal{H}%
}_{2\dim [(\mathcal{P}_{m,n})^d]+1}$ for $l \geq 2\dim [(\mathcal{P}%
_{m,n})^d]+1$. Moreover, following the argument for Lemma 2.1 in \cite{F1}, we see that if $\vec{H}(x)=\vec{f}(x)+\vec{I}(x)$ for some $\mathcal{R}%
_{m,n}^{x}$-submodule $\vec{I}(x)$ of $(\mathcal{P}%
_{m,n})^{d}$ and if the Glaeser refinement
$\vec{H}^{\prime }(x)\not=\emptyset$, then $\vec{H}^{\prime }(x)=\vec{f}%
_1(x)+\vec{I}_1(x)$ for some $\vec{f}_1(x)\in \vec{H}^{\prime }(x)$ and some $\mathcal{R}%
_{m,n}^{x}$-submodule $\vec{I}_l(x)$ of $(\mathcal{P}%
_{m,n})^{d}$. Consequently, by
Theorem \ref{Thm3}, we easily see that Problem \ref{problem3-1} is solvable
for $X=C^m(\mathbb{R}^n)$ if and only if the bundle $\vec{\mathcal{H}}%
_{2\dim [(\mathcal{P}_{m,n})^d]+1}$ contains no empty fiber.

\bibliographystyle{plain}
\bibliography{papers}

\end{document}